\newenvironment{proof}{\par\noindent{\bf Proof.}\ }{\hfill$\Box$\par\medskip}
\newtheorem{theorem}{Theorem}[section]
\newtheorem{lemma}[theorem]{Lemma}
\newtheorem{remark}[theorem]{Remark}
\newcommand{\equal}{&\!\!\!=\!\!\!&}
\newcommand{\LD}[1]{\left[#1\right]}
\newcommand{\CC}{\mathbb C}
\newcommand{\PP}{\mathbb P}
\newcommand{\ZZ}{\mathbb Z}
\newcommand{\fK}{{\Bbb K}}
\newcommand{\hpg}[5]{{}_{#1}\mbox{\rm F}_{\!#2}\!
  \left(\left.{#3 \atop #4}\right| #5 \right) }
\title{Schlesinger transformations for algebraic Painlev\'e VI solutions}
\author{Raimundas Vidunas\thanks{Supported by the 21 Century COE Programme
"Development of Dynamic Mathematics with High Functionality" of the Ministry
of Education, Culture, Sports, Science and Technology of Japan. 
E-mail: {\sf rvidunas@gmail.com}}\; 
and\; Alexander~V.~Kitaev\thanks{Supported by
JSPS grant-in-aide no.~$14204012$. 
E-mail: {\sf kitaev@pdmi.ras.ru}}\\
Department of Mathematics, Kyushu University, 812-8581 Fukuoka, Japan\footnotemark[1]\\
Steklov Mathematical Institute, Fontanka 27, St. Petersburg, 191023, Russia\footnotemark[2]\\
and\\
School of Mathematics and Statistics, University of Sydney,\\
Sydney, NSW 2006, Australia\footnotemark[1]\;\,\footnotemark[2]}
\date{} 
\begin{document}

\maketitle

\begin{abstract}
Various Schlesinger transformations can be combined with a direct pull-back of a hypergeometric 
$2\times 2$ system to obtain $RS^2_4$-pullback transformations to isomonodromic $2\times 2$ Fuchsian systems with 4 singularities. The corresponding Painlev\'e VI solutions are algebraic functions,
possibly in different orbits under Okamoto transformations. 
This paper demonstrates a direct computation of Schlesinger transformations
acting on several apparent singular points, and presents an algebraic procedure (via syzygies)
of computing algebraic Painlev\'e VI solutions without deriving full RS-pullback transformations.
\vspace{24pt}\\
{\bf 2000 Mathematics Subject Classification}: 34M55, 33E17. 
\vspace{24pt}\\
{\bf Short title}: {$RS$-pullback transformations}\\
{\bf Key words}:  $RS$-pullback transformation, isomonodromic Fucshian system, the sixth Painlev\'e equation,  algebraic solution.
\end{abstract}

\newpage

\section{Introduction}

General pullback transformations of differential 
systems $d\Psi(z)/dz=M(z)\Psi(z)$ have the following general form:
\begin{equation}  \label{eq:rstrans}
z\mapsto R(x),\qquad \Psi(z)\mapsto S(x)\,\Psi(R(x)),
\end{equation}
where $R(x)$ is a rational function of $x$, and $S(x)$ is 
a linear transformation of function vectors. The transformed equation is
\begin{equation} \label{eq:pbacked}
\frac{d\Psi(x)}{dx}=\left(\frac{dR(x)}{dx}\,S^{-1}(x)M(R(x))S(x)-S^{-1}(x)\frac{dS(x)}{dx}\right) \Psi(x).
\end{equation}
The 
transformation by $S(x)$ is analogous here to
{\em projective equivalence} transformations $y(x)\to\theta(x)y(x)$ of ordinary differential equations.
If $S(x)$ is the identity transformation, we have a {\em direct pullback} of a differential equation.
For transformations to parametric (say, isomonodromic) equations, 
$R(x)$ and $S(x)$ may depend algebraically on parameter(s). 

If the equation $d\Psi(z)/dz=M(z)\Psi(z)$ is a Fuchsian isomonodromic system, one often considers
a {\em Schlesinger transformation} for $S(x)$, whereby the local monodromy difference at any $x$-point
is shifted by an integer. For example, $S(x)$ may be designed to remove apparent singularities of
the direct pullback with respect to $R(x)$. In this context, pullback transformations (\ref{eq:rstrans})
are called {\em $RS$-transformations} in \cite{K2}, \cite{HGBAA}, stressing the composition of 
a rational change of the independent variable $z\mapsto R(x)$ and 
the Schlesinger transformation $S(x)$. To merge terminology, 
we refer to these pullback transformations as {\em $RS$-pullbacks},
or {\em $RS$-pullback transformations}.

The subject of this article is construction of Schlesinger $S$-transformations 
for the $RS$-pullback transformations of $2\times2$ matrix hypergeometric equations
to isomonodromic $2\times2$ Fuchsian systems with 4 singular points. 
Corresponding solutions of the sixth Painlev\'e equation are algebraic functions, since they are determined algebraically by matrix entries of pullbacked equations (\ref{eq:pbacked}) while
those entries are algebraic functions in $x$ and the isomonodromy parameter. The second author conjectured in \cite{HGBAA} that all algebraic solutions of the sixth Painlev\'e equation can be obtained by $RS$-pullback transformations of matrix hypergeometric equations, up to Okamoto transformations \cite{O}. 

Computation of $S$-parts of suitable $RS$-transformations 
to $2\times2$ Fuchsian systems with 4 singular points does not look hard in principle.
However, this problem is not as straightforward as finding suitable projective equivalence
transformations for scalar differential equations. This article demonstrates computation of 
$RS$-transformations by several detailed examples. We use two coverings $z=R(x)$ 
computed in \cite{HGBAA}; our full $RS$-coverings are already implied there. 

In this paper, we construct a desired Schlesinger transformation at once, instead of composing several simple Schlesinger transformations (each shifting just two local monodromy differences) as was done
in \cite{AK1}, \cite{AK2}, \cite{JM}. In particular, we avoid factorization of high degree polynomials when shifting local monodromy differences at all conjugate roots by the same integer. In the context of isomonodromy problems, this approach is adopted in \cite{FN} as well.

An important observation is that the same rational covering $R(x)$ can be used in
several $RS$-pullback transformations. Application of different $RS$-transformations to respectively different matrix differential equations gives different algebraic Painlev\'e VI solutions.
For example, \cite{KV3} demonstrates usage of
the same degree 10 covering to pullback three different hypergeometric equations
(with the local monodromy differences $1/2$, $1/3$, $k/7$ with $k=1,2$ or $3$) 
and obtain three algebraic Painlev\'e VI solutions unrelated by fractional-linear or Okamoto transformations. 

In our other concrete examples, we start with matrix hypergeometric equations with the icosahedral monodromy group.  The pullbacked Fuchsian equations have the icosahedral monodromy group as
well. Corresponding Painlev\'e VI solutions are called {\em icosahedral} \cite{Bo2}. 
There are 52 types of icosahedral Painlev\'e VI solutions in total \cite{Bo2},
up to branching representation of the icosahedral monodromy group 
(or equivalently, up to Okamoto transformations). 
We recompute icosahedral solutions of Boalch types 26, 27, 31, 32. 

Second order ordinary Fuchsian equations (or $2\times 2$ first order matrix Fuchsian equations)
with a finite monodromy group are always pullbacks of a standard hypergeometric equation with the same monodromy group, as asserted by celebrated Klein's theorem \cite{Klein84}.
In particular, existence of pull-back transformations for the four icosahedral examples follows from
Klein's theorem.  R.~Fuchs \cite{F2} soon considered extension of Klein's theorem to algebraic solutions of Painlev\'e equations. Recently, Ohyama  and Okumura 
\cite{Oh} showed that algebraic solutions of Painlev\'e equations from the first to the fifth do arise from pull-back transformations of confluent hypergeometric equations,  affirming the formulation of R.~Fuchs.
The pullback method for computing algebraic Painlev\'e VI solutions
was previously suggested in \cite{HGBAA}, \cite{AK2},  \cite{K2}, \cite{Do}.
The alternative representation-theoretic approach is due to Dubrovin-Mazzocco \cite{DM}.
Recently, it was used  \cite{LosTyk} to complete classification of algebraic Painlev\'e VI solutions.

The article is organized as follows. Section \ref{sec:pbmaps} presents two almost Belyi coverings we employ. The coverings have degree 8 and 12; they were previously used in \cite{HGBAA}. 
Section \ref{sec:fullrs} demonstrates two examples of full $RS$-pullback transformations, both with respect to the degree 8 covering. 
In Section \ref{sec:generalrs} we formulate basic algebraic facts useful in computations of
$RS$-pullback transformations. 
Section \ref{sec:fullrs2} gives a direct formula for some algebraic Painlev\'e VI solutions, with minimum information from full $RS$-transformations. In Section \ref{sec:algsols}, the remaining examples of Painlev\'e VI solutions are computed. The Appendix presents the Jimbo-Miwa correspondence
between solutions of the sixth Painlev\'e equation and isomonodromic Fuchsian $2\times 2$
systems, and the matrix hypergeometric equation. In particular, the notations
$P_{VI}(\nu_0,\nu_1,\nu_t,\nu_\infty;t)$ and $E(\nu_0,\nu_1,\nu_t,\nu_\infty;y(t);z)$
for the Painlev\'e VI equation and corresponding isomonodromic Fuchsian systems 
are introduced in the Appendix.

The authors prepared {\sf Maple 9.5} worksheets supplementing this article and \cite{KV2},  \cite{KV3},
with the formulas in {\sf Maple} input format, and demonstration of key
computations. To access the worksheet, readers may contact the authors,
or search a current website of the first author on the internet.

\section{Almost Belyi coverings}
\label{sec:pbmaps}

First we introduce notation for ramification patterns, and later for $RS$-transformations.
A ramification pattern for an almost Belyi covering of degree $n$ is denoted by
$R_4(P_1|\,P_2|\,P_3)$, where $P_1,P_2,P_3$ are three partitions of $n$
specifying the ramification orders above three points. The ramification pattern
above the fourth ramification locus is assumed to be $2+1+1+\ldots+1$.
By {\em the extra ramification point} we refer to the simple ramification point in the fourth fiber.
The Hurwitz space for such a ramification pattern is generally one-dimensional 
\cite[Proposition 3.1]{zvonkin}. 

We use only genus 0 almost Belyi coverings, and write them as $\PP_x^1\to\PP_z^1$, 
meaning that the projective line with the projective coordinate $x$ is mapped to the projective line
with the coordinate $z$. Then the total number
of parts in $P_1$, $P_2$, $P_3$ must be equal to $n+3$, according to \cite[Proposition 2.1]{HGBAA};
this is a consequence of Riemann-Hurwitz formula.


We use almost Belyi coverings with the following ramification patterns:
\begin{eqnarray} \label{eq:ramif8}
& R_4\big(5+1+1+1\;|\;2+2+2+2\;|\;3+3+2\big),\\ 
\label{eq:ramif12a}
& R_4\big(3+3+3+3\;|\;2+2+2+2+2+2\;|\;5+4+1+1+1\big).
\end{eqnarray}
The degree of the coverings is 8 and 12, respectively. 
For each covering type, the three specified fibers with ramified points
can be brought to any three distinct locations 
by a fractional-linear transformation of $\PP^1_z$. We assign the first partition to $z=0$,
and the next two partitions --- to $z=1$ and $z=\infty$ respectively.
Similarly, by a fractional-linear transformation of $\PP^1_x$ we may choose any three 
$x$-points\footnote{Strictly speaking, the $x$-points in our settings are curves, or branches, parametrized by an isomonodromy parameter $t$ or other parameter, since the Hurwitz spaces
for almost Belyi maps are one-dimensional. For simplicity, we ignore the dimensions introduced
by  such parameters, and consider a one-dimensional Hurwitz space as a generic point.}
as $x=0$, $x=1$, $x=\infty$. 

For direct applications to the Painlev\'e VI equation, it is required to normalize
the point above $z=\infty$ with the deviating ramification order 2, 4 (respectively)
and the three nonramified points above $\{0,1,\infty\}\subset\PP^1_z$ as
$x=0$, $x=1$, $x=\infty$, $x=t$. We refer to explicit almost Belyi coverings normalized
this way as {\em properly normalized}. 

Properly normalized coverings with ramification patterns (\ref{eq:ramif8})--(\ref{eq:ramif12a}) were first
computed in \cite{HGBAA}. In computation of $RS$-transformations, compact expressions 
for non-normalized coverings are more convenient to use. The coverings can be computed 
on modern computers either using the most straightforward method, or an improved method \cite{KV2}
that uses differentiation. 
Here we present just explicit expressions for the almost Belyi coverings.

The degree 8 covering is:
\begin{equation} \label{eq:phi8}
\varphi_8(x)=
\frac{(s+1)^2\,x^5\left(9(s+1)^2x^3-24s(s+3)x^2+8s(11s-1)x+48s^2\right)}{64\,s\,\left(x^2-2sx-s\right)^3}.
\end{equation}
The Hurwitz space is realized here by a projective line with the projective parameter $s$.
(In pullbacked Fuchsian equations, $s$ will be the isomonodromy parameter.)
One can check that
\begin{equation} \label{eq:phi81}
\varphi_8(x)-1=
\frac{\left(3(s+1)^2x^4-4s(s+3)x^3+12s(s-1)x^2+24s^2x+8s^2\right)^2}{64\,s\,\left(x^2-2sx-s\right)^3}.
\end{equation}
It is evident that the ramification pattern is indeed (\ref{eq:ramif8}). 
The extra ramification point is $x=5s$. 
To get a properly normalized expression, the degree 3 polynomial in the numerator of $\varphi_8(x)$
has to be factorized. We reparametrize 
\begin{equation} \label{eq:ts8}
s=\frac{2(u-1)}{u^3+4u^2+2u+2}
\end{equation}
and make the fractional-linear transformation
\begin{equation} \label{eq:lambda8}
x\mapsto \frac{2(u+8)w}{3(u+2)^4}(2x-1)-\frac{2(u-1)(u^2-4u-24)}{3(u+2)^4},
\end{equation}
where $w=\sqrt{u(u-1)(u+3)(u+8)}$. Apparently, the Hurwitz space parametrising the
properly normalized almost Belyi covering has genus 1. 
We obtain the following properly normalized expression:
\begin{eqnarray} \label{eq:phi8a}
\hspace{-18pt} \widehat{\varphi}_8(x)=
\frac{u^5(u+8)^3(u+3)}{8(u^3+4u^2+2u+2)} 
\frac{x\,(x-1)\,(x-t_8)\!\left(x\!-\!\frac12\!-\!\frac{(u-1)(u^2-4u-24)}{2w(u+8)}\right)^5}
{\left(x^2-(L_1w+1)\,x+\frac{1}2L_1w-L_2\right)^3},
\end{eqnarray}
where
\begin{equation} \label{eq:t8}
t_8=\frac12+\frac{(u-1)(3u^4+12u^3+24u^2+64u+32)}{2u^2(u+8)\sqrt{u(u-1)(u+3)(u+8)}},
\end{equation}
and
\begin{eqnarray*} \textstyle
L_1=\frac{(u-1)(u+4)(u^2-10)}{(u+3)(u+8)^2(u^3+4u^2+2u+2)},\qquad
L_2=\frac{5u^6+40u^5+20u^4-320u^3-40u^2+1216u-192}{8(u+3)(u+8)^2(u^3+4u^2+2u+2)}.
\end{eqnarray*}
To get to the degree 8 covering in \cite[pages 11--12]{HGBAA}, 
one has to make the substitutions $u\mapsto -8(s+1)^2/(s^2-34s+1)$ or $u\mapsto (8s_1+1)/(1-s_1)$ . 
After the first substitution, the quadratic polynomial in the denominator of 
(\ref{eq:phi8a}) factors as well.

The degree 12 covering is given by:
\begin{equation} \label{eq:phi12}
\varphi_{12}(x)= \frac4{27(s+4)^3}\frac{F_{12}^3}{x^5G_{12}}, \quad\mbox{or}\quad
\varphi_{12}(x)-1= \frac1{27(s+4)^3}\frac{P_{12}^2}{x^5G_{12}},
\end{equation}
where
\begin{eqnarray} \label{eq:ph12p}
F_{12}\!\equal x^4-4(s+3)x^3+(s^2+6s+14)x^2+2(s+6)x+1, \nonumber\\
G_{12}\!\equal sx^3-4(s^2+3s-1)x^2-4(2s+11)x-4,\\  \nonumber
P_{12}\!\equal 2x^6-12(s+3)x^5+15(s^2+6s+10)x^4+2s(s^2+9s+15)x^3\qquad\\
&& +6(s^2+9s+25)x^2+6(s+6)x+2. \nonumber
\end{eqnarray}
The extra ramification point is $x=-5/s$. 
To get a properly normalized expression, we reparametrize 
\begin{equation} \label{eq:ts12}
s=\frac{(u^2-5)(u^2+4u-1)(u^2-4u-1)}{8(u+1)^2(u-1)^2},
\end{equation}
and make the fractional-linear transformation
\begin{equation} \label{eq:lambd12}
x\mapsto \frac{(u+1)^2(u-1)^2}{2(u^2-5)}-\frac{(u+1)^3(u-3)^3(u^2+3)\,x}{2(u-1)^2(u^2-5)(u^2-4u-1)}.
\end{equation}
The obtained expression is
\begin{eqnarray}
\!\!\!\widehat{\varphi}_{12}(x)=\!\frac{1024(u+1)^{20}(u-3)^{12}
\left(x^4-\frac{(u^2-4u-1)(3u^6-21u^4+49u^2+33)}{(u+1)^5(u-3)^3}x^3+L_6\right)^3}
{27(u^2\!+\!3)^5(u^2\!-\!5)^5(u^2\!+\!4u\!-\!1)(u^2\!-\!4u\!-\!1)^5
\,x(x-1)\!\left(x-t_{12}\right)\!\left(x-t^*_{12}\right)^5},
\end{eqnarray}
where
\begin{eqnarray} \label{eq:t12}
t_{12}= \frac{(u-1)^5(u+3)^3(u^2-4u-1)}{(u+1)^5(u-3)^3(u^2+4u-1)},
\qquad t^*_{12}=\frac{(u-1)^4(u^2-4u-1)}{(u+1)(u-3)^3(u^2+3)},
\end{eqnarray}
and
\begin{eqnarray*}
L_6\equal \textstyle
\frac{(u^2-4u-1)^2(49u^{12}-686u^{10}+3895u^8-9700u^6+10575u^4-2446u^2+2409)}
{16(u+1)^{10}(u-3)^6}x^2 \\&& \textstyle
-\frac{(u-1)^5(u^2-4u-1)^3(9u^8-144u^6+874u^4-2184u^2+2469)}{8(u+1)^{10}(u-3)^9}x
+\frac{(u-1)^{10}(u+3)^2(u^2-4u-1)^4}{16(u+1)^{10}(u-3)^{10}}.
\end{eqnarray*}
The Hurwitz space parametrising this
properly normalized almost Belyi covering has still genus 0. 
To get the degree 12 covering in \cite{HGBAA}, one has to consider $1\big/\widehat{\varphi}_{12}(x)$,
and substitute   $u\mapsto (s-3)/(s+1)$.

In \cite{HGBAA}, the following symbol is introduced to denote 
$RS$-pullback transformations of $E(e_0,e_1,0,e_\infty;t;z)$ with respect to a covering with ramification
pattern $R_4(P_0|P_1|P_\infty)$: 
\begin{equation}
RS^2_4\left( \,e_0\, \atop \,P_0\, \right| 
{\,e_1\, \atop \,P_1\, } \left| \,e_\infty\, \atop \,P_\infty\, \right),
\end{equation}
where the subscripts 2 and 4 indicate a second order Fuchsian system with 4 singular points 
after the pullback. 
We assume the same assignment of the fibers $z=0$, $z=1$, $z=\infty$ as for the $R_4$-notation.
Location of the $x$-branches $0,1,t,\infty$ does not have to be normalized.
In Section \ref{sec:fullrs}, we present explicit computations for 
$RS^2_4\left( 1/5 \atop 5+1+1+1 \right| {1/2 \atop 2+2+2+2} \left| 1/3 \atop 3+3+2 \right)$ and
$RS^2_4\left( 2/5 \atop 5+1+1+1 \right| {1/2 \atop 2+2+2+2} \left| 1/3 \atop 3+3+2 \right)$.
These $RS$-pullbacks 
produce algebraic solutions of $P_{VI}(1/5,1/5,1/5,\pm1/3;t)$ 
respectively $P_{VI}(2/5,2/5,2/5,\pm2/3;t)$. 

As was noticed in \cite{HGBAA} and \cite{Do}, some
algebraic Painlev\'e VI solutions determined by $RS$-pullback transformations 
$RS^2_4\left( \,1/k_0\, \atop P_0 \right| {\,1/k_1\, \atop P_1 } \left| \,1/k_\infty\, \atop P_\infty \right)$,
with $k_0,k_1,k_\infty\in\ZZ$, can be calculated from the rational covering alone, without computing any Schlesinger transformation. Here is a general formulation of this situation.
\begin{theorem} \label{kit:method}
Let $k_0,k_1,k_\infty$ denote three integers, all $\ge 2$. Let $\varphi:\PP^1_x\to\PP^1_z$ denote
an almost Belyi map, dependent on a parameter $t$. Suppose that the following conditions are satisfied:
\begin{itemize}
\item[(i)] The covering $z=\varphi(x)$ is ramified above the points $z=0$, $z=1$, $z=\infty$; there is
one simply ramified point $x=y$ above $\PP_z^1\setminus\{0,1,\infty\}$; 
and there are no other ramified points.
\item[(ii)] The points $x=0$, $x=1$, $x=\infty$, $x=t$ 
lie above the set $\{0,1,\infty\}\subset\PP^1_z$.
 \item[(iii)] The points in $\varphi^{-1}(0)\setminus\{0,1,t,\infty\}$ are all ramified with the order $k_0$.
The points in $\varphi^{-1}(1)\setminus\{0,1,t,\infty\}$ are all ramified with the order $k_1$.
The points in $\varphi^{-1}(\infty)\setminus\{0,1,t,\infty\}$ are all ramified with the order $k_\infty$.
\end{itemize}
Let $a_0,a_1,a_t,a_{\infty}$ denote the ramification orders at $x=0,1,t,\infty$, respectively. 
Then the point $x=y$, as a function of $x=t$, is an algebraic solution of 
\begin{equation} \label{eq:p6kit}
P_{VI}\left(\frac{a_0}{k_{\varphi(0)}},\frac{a_1}{k_{\varphi(1)}},
\frac{a_t}{k_{\varphi(t)}}, 1-\frac{a_{\infty}}{k_{\varphi(\infty)}};t\right).
\end{equation}
\end{theorem}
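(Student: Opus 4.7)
The plan is to follow the $RS$-pullback recipe directly: start with the matrix hypergeometric equation $E(1/k_0,1/k_1,0,1/k_\infty;z)$ (which has only three true singularities $z=0,1,\infty$), perform the direct pullback under $z=\varphi(x)$, and then argue that a Schlesinger transformation removes all superfluous singularities, leaving an isomonodromic Fuchsian system of exactly the shape to which the Jimbo--Miwa correspondence (as recalled in the Appendix) applies.

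First, I would analyze the local exponents of the direct pullback. Under the formula (\ref{eq:pbacked}) with $S=I$, at a preimage of $z=0$ of ramification order $m$ the local monodromy difference is scaled from $1/k_0$ to $m/k_0$; analogously at preimages of $z=1$ and $z=\infty$. At the extra simple ramification point $x=y$ the original local difference is $1$ (nothing being singular there), so after pullback it is $2$, an integer; and at any unramified point above $\PP^1_z\setminus\{0,1,\infty\}$ the pulled-back equation is regular. By hypothesis (iii), at every preimage of $z=0$ outside $\{0,1,t,\infty\}$ the ramification order is exactly $k_0$, so the local monodromy difference there is $1\in\ZZ$; similarly for preimages of $z=1$ and $z=\infty$. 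Thus the only singular points of the pullback are $x=0,1,t,\infty$ (with differences $a_0/k_{\varphi(0)}$, $a_1/k_{\varphi(1)}$, $a_t/k_{\varphi(t)}$, $a_\infty/k_{\varphi(\infty)}$), together with $x=y$ and the finitely many preimages of $\{0,1,\infty\}$ where the local difference is a nonzero integer.

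Next, I would invoke the $S$-part of the $RS$-transformation to eliminate those spurious integer-difference singularities. By the general theory of Schlesinger transformations of $2\times 2$ Fuchsian systems, one can shift the local monodromy difference at any point by any integer amount while preserving monodromy; in particular, the apparent singularities above $z=0,1,\infty$ (other than the four normalized points) can be cleared, and the local difference at $x=y$ can be normalized so that $y$ remains as a single apparent singularity. The resulting Fuchsian system has precisely four true singularities at $x=0,1,t,\infty$, with the local monodromy differences listed above (the $1-a_\infty/k_{\varphi(\infty)}$ at $\infty$ arising from the normalization convention for $\nu_\infty$ explained in the Appendix), plus one apparent singularity at $x=y$.

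Finally, since $\varphi$ depends on the Hurwitz-space parameter and this parameter is algebraically related to $t$, the family of pullbacks is an isomonodromic deformation in $t$: the monodromy representation is pulled back from the rigid hypergeometric one and is therefore $t$-independent. By the Jimbo--Miwa correspondence stated in the Appendix, the position $y=y(t)$ of the unique apparent singularity of such a 4-point isomonodromic $2\times 2$ Fuchsian system satisfies (\ref{eq:p6kit}). Algebraicity of $y(t)$ follows because $y$ is cut out algebraically on the one-dimensional Hurwitz space by condition (i), while $t$ is also an algebraic coordinate on the same space. The step I expect to require most care is the bookkeeping of the local monodromy differences at $x=\infty$ under the Jimbo--Miwa normalization, i.e.\ verifying that the direct-pullback value $a_\infty/k_{\varphi(\infty)}$ translates to the Painlev\'e parameter $1-a_\infty/k_{\varphi(\infty)}$; everything else is a routine ramification-and-exponent count.
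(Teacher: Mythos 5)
Your overall strategy (direct pullback, a Schlesinger transformation clearing the integer-difference singularities, then Jimbo--Miwa) is the right framework, and the exponent bookkeeping in your first paragraph is correct. But there is a genuine gap at the central step: you never show that the extra ramification point $x=y$ is the quantity that the Jimbo--Miwa correspondence actually extracts from the transformed system. In the matrix picture used here, $x=y$ is an \emph{ordinary} point of the direct pullback (the coefficient matrix $\varphi'(x)M(\varphi(x))$ is holomorphic there, since $\varphi(y)\notin\{0,1,\infty\}$), and it remains ordinary after any Schlesinger transformation supported over $\{0,1,\infty\}$; the Painlev\'e VI solution is instead the $x$-root of the lower-left entry of the normalized system. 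Equivalently, in the scalar picture the apparent singularity sits at the zero of an off-diagonal entry, and once the Schlesinger transformation mixes the rows there is no a priori reason for that zero to land at $x=y$. That it does is precisely the content of the theorem, and it is sensitive to choices you leave unspecified: with the very same covering $\varphi_8$, pulling back $E(2/5,1/2,0,1/3;t;z)$, or reading off the upper-right entry of the transformation you describe, produces algebraic Painlev\'e VI solutions ($y_{27}$, $\widetilde y_{26}$, $\widetilde y_{27}$) that are \emph{not} the extra ramification point. The actual proof (Theorem 3.1 of \cite{KV3}; see also the first Remark of Section \ref{sec:fullrs2}) takes $e_0=1/k_0$, $e_1=1/k_1$, $e_\infty=(k_\infty-1)/k_\infty$ and shows the relevant syzygy has a vanishing component, so the lower-left entry collapses to a constant multiple of $(\varphi'/\varphi)\,FH/G$ with the factors $F$, $H$, $G$ cancelling against the ramification data; only then is its unique surviving zero the zero of $\varphi'$ away from the fibers over $\{0,1,\infty\}$, i.e.\ $x=y$. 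Without this (or an equivalent computation), the identification of $y$ with the Painlev\'e transcendent is an assertion, not a proof.

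Two smaller points. First, the existence of a single Schlesinger transformation clearing all spurious singularities simultaneously (subject to the parity condition on the total shift, and without creating new singularities) is itself the nontrivial content of the syzygy analysis of Sections \ref{sec:fullrs}--\ref{sec:generalrs}; ``by the general theory'' is too quick. Second, you flag the $1-a_\infty/k_{\varphi(\infty)}$ normalization at infinity as the delicate step, but since $P_{VI}(\ldots,\nu_\infty)$ depends on $\nu_\infty$ only through $(\nu_\infty-1)^2$, that part is actually routine; the delicate step is the one identified above.
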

\begin{proof} See Theorem 3.1 in \cite{KV3}.
\end{proof}

Our two coverings $\widehat{\varphi}_8(x)$, $\widehat{\varphi}_{12}(x)$ immediately give solutions of 
$P_{VI}(1/5,1/5,1/5,1/3;t)$, $P_{VI}(1/5,1/5,1/5,1/5;t)$, respectively. To parametrize the algebraic solutions, it is convenient to parametrize 
the indeterminant $t$ as, respectively,  
$t_8$ in (\ref{eq:t8}) or $t_{12}$ in (\ref{eq:t12}).  

Direct application of Theorem \ref{kit:method} to $\widehat{\varphi}_8(x)$ gives the 
following\footnote{Throughout this paper, the indices 26,  27, 31, 32 
refer to the Boalch types of icosahedral Painlev\'e VI solutions.}
solution $y_{26}(t_{8})$ of  $P_{VI}(1/5,1/5,1/5,1/3;t_{8})$:
\begin{eqnarray} \label{eq:y26}
y_{26}\equal \frac12+\frac{(u-1)(u+3)(u^3+4u^2+14u+8)}{2(u^3+4u^2+2u+2)\sqrt{u(u-1)(u+3)(u+8)}}.
\end{eqnarray}
Note that $t_{8}$ is the non-ramified point of $\widehat{\varphi}_{8}(x)$ above $z=0$
not equal to $x=0$ or $x=1$, while $y_{26}$ is the extra ramification $x$-point of 
$\widehat{\varphi}_{8}(x)$; it corresponds to the point $x=5s$
in the expression (\ref{eq:phi8}) of ${\varphi}_{8}(x)$.
To get the parametrizations in \cite{HGBAA}, one has to substitute  
$u\mapsto -8(s+1)^2/(s^2-34s+1)$ or $u\mapsto (8s_1+1)/(1-s_1)$.
In Section \ref{sec:fullrs}, we derive the same algebraic solution 
by computing the full transformation 
$RS^2_4\left( 1/5 \atop 5+1+1+1 \right| {1/2 \atop 2+2+2+2} \left| 1/3 \atop 3+3+2 \right)$.

Similarly, application of Theorem \ref{kit:method} to $\widehat{\varphi}_{12}(x)$ gives
the following solution $y_{31}(t_{12})$ of  $P_{VI}(1/5,1/5,1/5,1/5;t_{12})$:
\begin{eqnarray} \label{eq:yt31}
y_{31}=\frac{(u-1)^4(u+3)^2}{(u-3)(u+1)(u^2+3)(u^2+4u+1)}.
\end{eqnarray}
To get the parametrization in \cite{HGBAA}, one has to substitute  
$u\mapsto (s-3)/(s+1)$. The implied $RS$-transformation is
$RS^2_4\left( 1/3 \atop 3+3+3+3 \right|  {1/2 \atop 2+2+2+2+2+2} \left| 1/5 \atop 5+4+1+1+1 \right)$.
As Section \ref{sec:algsols} will demonstrate, Theorem \ref{kit:method} can be applied to an alternative normalization of ${\varphi}_{12}(x)$, giving a solution of $P_{VI}(1/4,1/4,1/4,-1/4;t)$.

Notice that the genus of algebraic Painlev\'e VI solutions is not a monotonic function of the minimal genus of Hurwitz spaces parametrizing the pull-back covering: the degree 8 covering
$\widehat{\varphi}_8(x)$  gives a genus 1 solution, while the degree 12 covering
 $\widehat{\varphi}_{12}(x)$ gives a genus 0 solution. Notice that the covering $\varphi_8(x)$ is still parametrized by a projective line, even if its normalization $\widehat{\varphi}_8(x)$ gives an algebraic Painlev\'e VI solution of genus 1. 

\section{Computation of Schlesinger transformations} 
\label{sec:fullrs}

This section starts with construction of
$RS^2_4\left( 1/5 \atop 5+1+1+1 \right| {1/2 \atop 2+2+2+2} \left| 1/3 \atop 3+3+2 \right)$,
demonstrating construction of the $S$-part of full $RS$-pullbacks as a single Schlesinger
transformation. This will gives us the same Painlev\'e VI solution $y_{26}(t_{8})$ as dictated
by Theorem \ref{kit:method}. From the full $RS$-transformations, we also easily derive a solution
of $P_{VI}(1/5,1/5,1/5,-1/3;t_8)$. 
Then we construct an example of
$RS^2_4\left( 2/5 \atop 5+1+1+1 \right| {1/2 \atop 2+2+2+2} \left| 1/3 \atop 3+3+2 \right)$
and derive solutions of $P_{VI}(2/5,2/5,2/5,2/3;t_8)$ and $P_{VI}(2/5,2/5,2/5,-2/3;t_8)$ of Boalch type 27.

Application of Appendix formulas (\ref{eq:asymtz0})--(\ref{eq:solinf}) to the equation 
$E(1/5,1/2,0,1/3;t;z)$ yields the following leading terms of dominant local solutions at the singular points, up to multiplication by constants: 
\begin{equation} \label{eq:leadus}
u_0={11\choose 1}z^{-\frac1{10}},\qquad u_1={11\choose -19}(1-z)^{-\frac14},
\qquad u_\infty={1\choose 0}z^{\frac1{6}}.
\end{equation}
Let $f_1(z), f_2(z)$ denote the normalized basis for solutions of $E(1/5,1/2,0,1/3;t;z)$. We have
$f_1(z)\sim {1\choose 0}z^{1/6}$ and $f_2(z)\sim {0\choose 1}z^{-1/6}$, as $z\to\infty$.
Up to scalar multiples, explicit expressions for these solutions can be copied from 
(\ref{eq:lsolzi1})--(\ref{eq:lsolzi2}).

The Fuchsian system 
for the equation $P_{VI}(1/5,1/5,1/5,1/3;t)$ must be an $RS$-pullback 
$RS^2_4\left( 1/5 \atop 5+1+1+1 \right| {1/2 \atop 2+2+2+2} \left| 1/3 \atop 3+3+2 \right)$
with respect to the covering $z=\widehat{\varphi}_{8}(x)$.
It is preferable to work with less the elaborate parametrization $z={\varphi}_{8}(x)$,
and apply the fractional-linear transformation (\ref{eq:lambda8}) to switch to
$z=\widehat{\varphi}_{8}(x)$ at the last stage. Let us denote
\begin{eqnarray}
F_8 \equal 9(s+1)^2x^3-24s(s+3)x^2+8s(11s-1)x+48s^2, \nonumber\\
P_8 \equal 3(s+1)^2x^4-4s(s+3)x^3+12s(s-1)x^2+24s^2x+8s^2,\\
G_8 \equal x^2-2sx-s,\nonumber
\end{eqnarray}
so that, copying (\ref{eq:phi8}) and (\ref{eq:phi81}), we have
\begin{equation}
\varphi_8(x)=\frac{(s+1)^2}{64s}\,\frac{x^5F_8}{G_8^3},\qquad\qquad
\varphi_8(x)-1=\frac{1}{64s}\,\frac{P^2_8}{G_8^3}.
\end{equation}
The direct pullback of $E(1/5,1/2,0,1/3;t,z)$ with respect
to $\varphi_{8}(x)$ is a Fuchsian system with singularities at $x=\infty$ and the roots of $F_{8}(x)$, and apparent singularities at $x=0$ and the roots of $G_{8}(x)$, $P_{8}(x)$. In particular, the local monodromy exponents at $x=\infty$ are $\pm1/3$, twice the exponents at $z=\infty$. 
We have to remove apparent singularities, and choose a solution basis $g_1(x)$, $g_2(x)$
of the pull-backed equation so that, up to constant multiples,  $g_1(x)\sim{1\choose 0}x^{1/10}$ and 
$g_2(x)\sim{0\choose1} x^{-1/10}$. This would allow straightforward 
normalization\footnote{We may require strict asymptotic behavior for $g_1(x)$, $g_2(x)$ without reference to constant multiples, as in \cite{AK2} and \cite{HGBAA}, but this is unnecessary. The Jimbo-Miwa correspondence merely requires existence of a basis with the strict asymptotics. There is no value in controlling strict identification of normalized bases all the way until final fractional-linear normalization, (\ref{eq:lambda8}) in this particular case.}
of the pullbacked equation for the Jimbo-Miwa correspondence. 

Let $T_{26}$ denote the matrix representing the basis $g_1(x)$, $g_2(x)$ in terms of the solution
basis  $f_1(\varphi_8(x))$, $f_2(\varphi_8(x))$ of the directly pullbacked equation.
That is, ${g_1\choose g_2}=T_{26}{f_1\choose f_2}$. 
The $S$-matrix in (\ref{eq:rstrans})--(\ref{eq:pbacked}) can be taken to be $T_{26}^{-1}$. 
It has to shift local exponents at $x=0$, and the roots of $G_8(x)$ and $P_8(x)$. The local exponents
at $x=\infty$ have to be shifted  as well, since the shifts of local monodromy differences must add to
an even integer. The matrix $T_{26}$ has to satisfy the following conditions:
\begin{enumerate}
\item {\bf Local exponent shifts for general vectors.}
For general vectors $u$, the vector $T_{26}u$ is:
$O\left(1/\sqrt{x}\right)$ at $x=0$; 
$O\left(1/\sqrt{P_{8}}\right)$ at the roots of $P_{8}(x)$;
$O\left(1/\sqrt{G_{8}}\right)$ at the roots of $G_{8}(x)$;
and $O(\sqrt{x})$ at infinity.
 \item {\bf Local exponent shifts for dominant solutions at singular points.}
We must have $T_{26}u_0=O(\sqrt{x})$ at $x=0$; $T_{26}u_1=O(\sqrt{P_8})$ at the roots of $P_8(x)$;
$T_{26}u_{\infty}=O(\sqrt{G_{8}})$ at the roots of $G_{8}(x)$; 
and $T_{26}u_\infty=O\left(1/\sqrt{x}\right)$ 
at infinity.
 \item {\bf Normalization at infinity.} The positive local monodromy exponent $1/{6}$ at $z=\infty$ gets
transformed to the local monodromy exponent $2\cdot\frac1{6}-\frac12<0$ at $x=\infty$.   
Hence the dominant solution $\sim {1\choose0}z^{1/6}$ should be mapped, up to a constant multiple,
to the vanishing solution $\sim {0\choose1}x^{-1/6}$; 
and the vanishing solution $\sim {0\choose1}z^{-1/6}$ should be mapped,
up to a constant multiple, to the dominant solution $\sim {1\choose0}x^{1/6}$. 
\end{enumerate}
By the first condition, the matrix $T_{26}$ has the form
\begin{equation}
T_{26}=\frac{1}{\sqrt{x\,G_{8}P_{8}}}\, \left(\begin{array}{cc} A_{26} & B_{26}\\ C_{26} & D_{26}
\end{array}\right),
\end{equation}
where the matrix entries $A_{26}$, $B_{26}$, $C_{26}$, $D_{26}$ are polynomials in $x$
of maximal degree 4, with the coefficients being rational functions in $s$. By the second
condition, the expressions $11A_{26}+B_{26}$ and $11C_{26}+D_{26}$ vanish at $x=0$;
$11A_{26}-19B_{26}$ and $11C_{26}-19D_{26}$
are divisible by $P_{8}$; and $A_{26}$, $C_{26}$ are divisible by $G_{8}$ and have
degree at most 3. Let us denote a few polynomials:
\begin{eqnarray*}
U_1=19\cdot\frac{11A_{26}+B_{26}}{x},&& V_1=\frac{11A_{26}-19B_{26}}{P_{8}},\qquad
 W_1=-220\cdot\frac{A_{26}}{G_{8}},\\
U_2=19\cdot\frac{11C_{26}+D_{26}}{x},&& V_2=\frac{11C_{26}-19D_{26}}{P_{8}},\qquad
 W_2=-220\cdot\frac{C_{26}}{G_{8}}.
\end{eqnarray*}
Then for $i=1,2$ we have:
\begin{equation} \label{eq:syzrel}
x\,U_i+P_{8}\,V_i+G_{8}\,W_i=0.
\end{equation}
In other words, the two polynomial vectors $(U_i,V_i,W_i)$ are syzygies
between the three polynomials $x$, $P_{8}$, $G_{8}$.
The last condition sets up the degrees for the entries of $T_{26}$:
\begin{equation} \label{eq:syzdeg26} 
\deg A_{26} \le 2, \quad \deg B_{26} = 4, \quad \deg C_{26} = 3,\quad \deg D_{26} \le 3.
\end{equation}

As it turns out, the syzygies giving relations (\ref{eq:syzrel}) of degree at most 4 form a
linear space of dimension 3. Here is a basis:
\begin{eqnarray} \label{eq:syzsols26}  \textstyle
(G_{8},0,-x), \qquad (xG_{8},0,-x^2),\qquad
\left( L_1,\,-1,\, -8s \right),
\end{eqnarray}
where $L_1=3(s+1)^2x^3-4s(s+3)x^2+4s(3s-1)x+8s^2$.  The third syzygy 
gives the entries $A_{26}$, $B_{26}$ satisfying (\ref{eq:syzdeg26}).
The first syzygy in (\ref{eq:syzsols26}) gives the entries $C_{26}$, $D_{26}$.
For constructing a transformation matrix $T_{26}$, we may multiply the syzygies (or the rows) 
by constant factors. Here is a suitable transformation matrix: 
\begin{eqnarray}
T_{26}=\frac{1}{\sqrt{x\,G_{8}P_{8}}} \left( \begin{array}{cc}
38s\,G_{8} & 55P_{8}+22sG_8 \\ 
19\,x\,G_{8} & 11\,x\,G_{8} \end{array} \right).
\end{eqnarray}
Using (\ref{eq:pbacked}) with $S=T_{38}^{-1}$, we routinely compute the transformed differential equation:
\begin{eqnarray} \label{eq:transff26}
\frac{d\Psi}{dx}=\frac{1}{F_{8}} \left( \begin{array}{cc}
K_1 & -\frac85s\,K_2 \\
-\frac25(x-5s) & -K_1 \end{array} \right)\Psi,
\end{eqnarray}
where
\begin{eqnarray*} \textstyle
K_1= \frac32(s+1)^2x^2-2s(s+3)x-\frac45s,\qquad
K_2=(15s^2+30s+4)x-7s(5s+4).
\end{eqnarray*}
Notice that the $x$-root of the lower-left entry of the transformed equation is the extra ramification point
of the covering $z=\varphi_8(x)$. We can apply the Jimbo-Miwa correspondence after reparametrization (\ref{eq:ts8}) and fractional-linear transformation (\ref{eq:lambda8}) 
of (\ref{eq:transff26}). Then a solution of $P_{VI}(1/5,1/5,1/5,1/3;t)$ is equal to
the $x$-root of the lower-left entry, while the independent variable $t$ is parametrized
by the singularity $t_8$ of the transformed Fuchsian equation. 
We get the same solution $y_{26}(t_{8})$ as in (\ref{eq:y26}), (\ref{eq:t8}),
reaffirming Theorem \ref{kit:method} for this case.
 
The full $RS$-pullback 
$RS^2_4\left( 1/5 \atop 5+1+1+1 \right| {1/2 \atop 2+2+2+2} \left| 1/3 \atop 3+3+2 \right)$
can provide a lot more additional results. For instance, the $x$-root of upper-right entry of $T_{26}$ determines a solution of $P_{VI}(1/5,1/5,1/5,-1/3;t_8)$. After applying transformations 
(\ref{eq:ts8})--(\ref{eq:lambda8}) to $K_2$, the $x$-root gives the following solution
$\widetilde{y}_{26}(t_{8})$: 
$$
\widetilde{y}_{26}=\frac12+
\frac{(u-1)(u+3)(2u^6+28u^5+106u^4+169u^3+274u^2+142u+8)}
{4(u^6+8u^5+35u^4+65u^3+5u^2-22u-11)\sqrt{u(u-1)(u+3)(u+8)}}.
$$
Alternatively, this solution can be computed from $y_{26}(t_{8})$ by applying a few Okamoto transformations.

Now we consider construction of an $RS$-pullback 
$RS^2_4\left( 2/5 \atop 5+1+1+1 \right| {1/2 \atop 2+2+2+2} \left| 1/3 \atop 3+3+2 \right)$
with respect to $\widehat{\varphi}_{8}(x)$, aiming for a solution of $P_{VI}(2/5,2/5,2/5,2/3;t)$. 
The leading terms of dominant local solutions of $E(2/5,1/2,0,1/3;t;z)$ at the singular points 
are constant multiples of
\begin{equation} \label{eq:leadvs}
 v_0={17\choose 7}z^{\frac15},\qquad 
v_1={17\choose -13}(1-z)^{-\frac14},\qquad
v_\infty={1\choose 0}z^{-\frac16}.
\end{equation}
Again, it is preferable to work first with the less elaborate covering $z={\varphi}_{8}(x)$.
The direct pullback of $E(2/5,1/2,0,1/3;t,z)$ with respect to $\varphi_{8}(x)$ is a Fuchsian system with the same singularities as in the previous case, but the local monodromy exponents at $x=0$ and the roots of $F_{8}(x)$ are multiplied by 2. Hence we have to shift the local exponent difference at $x=0$ by $2$, and we do not shift the local exponents at $x=\infty$.
Let $T_{27}$ denote the transition matrix to a basis of Fuchsian solution normalized at $x=\infty$, 
analogous to $T_{26}$ above. The matrix $T_{27}$ has to satisfy the following conditions:
\begin{enumerate}
\item {\bf Local exponent shifts for general vectors.}
For general vectors $u$, the vector $T_{27}u$ is:
$O\left(1/x\right)$ at $x=0$;
$O\left(1/\sqrt{P_{8}}\right)$ at the roots of $P_{8}$;
$O\left(1/\sqrt{G_{8}}\right)$ at the roots of $G_{8}$; and $O(1)$ at
infinity. Hence, the matrix $T_{27}$ has the form
\begin{equation}
T_{27}=\frac{1}{x\,\sqrt{G_8P_8}}\, \left(\begin{array}{cc} A_{27}& B_{27}\\ C_{27} & D_{27}
\end{array}\right),
\end{equation}
where $A_{27}$, $B_{27}$, $C_{27}$, $D_{27}$ are polynomials in $x$ 
of maximal degree 4. 
 \item {\bf Local exponent shifts for dominant solutions at singular points.}
We must have: $T_{27}v_0=O(x)$ at $x=0$; $M_{27}v_1=O(\sqrt{P_{8}})$ at the roots of $P_{8}(x)$; 
and $M_{27}v_{\infty}=O(\sqrt{G_8})$ at the roots of $G_{8}(x)$.
This means that the following are triples of polynomials in $x$:
\begin{eqnarray*}
\left( 13\cdot\frac{17A_{27}+7B_{27}}{x^2}, \; 7\cdot\frac{17A_{27}-13B_{27}}{P_{8}}, \;
 -340\cdot\frac{A_{27}}{G_8} \right),\\
\left( 13\cdot\frac{17C_{27}+7D_{27}}{x^2}, \; 7\cdot\frac{17C_{27}-13D_{27}}{P_{8}}, \;
 -340\cdot\frac{C_{27}}{G_8} \right),
\end{eqnarray*}
and the polynomial triples are syzygies between $x^2$, $P_{8}$, $G_8$. 
\item {\bf Normalization at infinity.} The local exponents at $x=\infty$ are not shifted by the Schlesinger transformation. Hence the dominant solution $\sim {1\choose0}z^{1/6}$ is mapped,
up to a constant multiple, to the dominant solution $\sim {1\choose0}x^{1/3}$; 
and the vanishing solution $\sim {0\choose1}z^{-1/6}$ is mapped,
up to a constant multiple, to the vanishing solution $\sim {0\choose1}x^{-1/3}$. 
This sets up the degrees for the entries of $T_{27}$:
\begin{equation} \label{eq:syzd27}
\deg A_{27}=4, \quad \deg B_{27} \le 3, \quad \deg C_{27} \le 3,\quad \deg D_{27} \le 4.
\end{equation}
\end{enumerate}
As it turns out, the syzygies relations 
of degree at most 4 form a linear space of dimension 2. Here is a syzygy basis:
\begin{eqnarray}   \label{eq:syzs27} 
S_1=\left( G_8,0,-x^2 \right), \quad
S_2=\left( (s+1)\left(3(s+1)x^2-4sx-4s\right),\, -1,\, -8s\,(x+1) \right).
\end{eqnarray}
To determine the entries $C_{27}$, $D_{27}$, we may take the syzygy $S_2$. 
To determine the entries $A_{27}$, $B_{27}$, we may take the syzygy 
$60(s+1)^2S_1-7S_2$. 
Up to multiplication of the two rows by scalar factors, we obtain 
\begin{eqnarray*}
&  A_{27}=-\frac{1}{17}\left(15(s+1)^2x^2-14sx-14s\right)G_8, & 
\textstyle B_{27}=\frac{17}{13}A_{27}+\frac{5}{13}P_8,\\
& C_{27}=\frac{14}{17}\,s\,(x+1)\,G_8, & 
\textstyle D_{27}=\frac{17}{13}C_{27}+\frac{5}{13}P_8.
\end{eqnarray*}
The transformed differential equation is
\begin{eqnarray}
\frac{d\Psi}{dz}=\frac{1}{F_8} \left( \begin{array}{cc}
K_3 & -2sK_4\\
\frac{14}5s\left(x+\frac{8(5s-1)}{15(s+1)}\right) & -K_3 \end{array} \right)\Psi.
\end{eqnarray}
where 
\begin{eqnarray*}
K_3\equal 
3(s+1)^2x^2-\frac{2s(25s+54)}5x+\frac{8s(150s^2+235s+1)}{75(s+1)},\\
K_4\equal (15s^2+20s-8)x+\frac{16(75s^2+100s+4)}{75(s+1)}.
\end{eqnarray*}
To get a solution of $P_{VI}(2/5,2/5,2/5,2/3;t)$ by the Jimbo-Miwa correspondence,
we have to apply reparametrization (\ref{eq:ts8}) and fractional-linear transformation (\ref{eq:lambda8}) 
to the lower left entry of the differential equation, and write down the $x$-root. We get the following solution $y_{27}(t_{8})$:
\begin{equation}
y_{27}=\frac12+\frac{(u+3)(4u^3-7u^2+4u+8)}{10u\sqrt{u(u-1)(u+3)(u+8)}}.
\end{equation}
To get the same parametrization of this solution as in \cite{Bo2}, one has to substitute
$u\mapsto-6s/(2s+1)$.

In the same way, the $x$-root of upper right entry $-3sK_4$ determines a solution 
of the equation $P_{VI}(2/5,2/5,2/5,-2/3;t)$. The solution $\widetilde{y}_{27}(t_{8})$ is the following:
$$
\widetilde{y}_{27}=\frac12+\frac{(u\!+\!3)(8u^9\!+\!90u^8\!+\!216u^7\!+\!670u^6
\!+\!2098u^5\!-\!571u^4\!-\!850u^3\!-\!7u^2\!-\!140u\!-\!56)}
{50u(2u^6\!+\!16u^5\!+\!30u^4\!+\!10u^3\!+\!45u^2\!+\!46u\!+\!13)\sqrt{u(u\!-\!1)(u\!+\!3)(u\!+\!8)}}.
$$

\section{Syzygies for $RS$-pullback transformations}
\label{sec:generalrs}

As we saw in the previous section, computation of Schlesinger transformations for full $RS$-pullback
transformations leads to computation of syzygies between three polynomials in one variable $x$. 
Recently, this syzygy problem got a lot of attention in computational algebraic geometry of rational
curves \cite{Cox}, \cite{CSC}. It was successfully considered by Franz Meyer \cite{fmeyer} already in 1887. David Hilbert famously extended Meyer's results in \cite{hilbert}.

Here are basic facts regarding the homogeneous version of the syzygy problem.
\begin{theorem}
Let $\fK$ denote a field, and let $n$ denote an integer.  Let $P(u,v)$, $Q(u,v)$, $R(u,v)$ denote homogeneous polynomials in $\fK[u,v]$ of degree $n$. We assume that these polynomials have no common factors. Let $Z$ denote the graded $\fK[u,v]$-module
of syzygies between $P(u,v)$, $Q(u,v)$, $R(u,v)$.

The module $Z$ is free of rank $2$. If $(p_1,q_1,r_1)$, $(p_2,q_2,r_2)$ is a homogenous basis for $Z$, then 
\begin{equation} \label{eq:hsyzdeg}
\deg (p_1,q_1,r_1)+\deg (p_2,q_2,r_2)=n,
\end{equation}
and the polynomial vector $(P,Q,R)$ is a $\fK$-multiple of
\begin{equation} \label{eq:syzcp}
\left(q_1r_2-q_2r_1,\; p_2r_1-p_1r_2,\; p_1q_2-p_2q_1 \right).
\end{equation}
\end{theorem}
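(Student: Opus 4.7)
The plan is to prove the three assertions---freeness of $Z$, the degree identity (\ref{eq:hsyzdeg}), and the minor formula (\ref{eq:syzcp})---with the Hilbert--Burch structure theorem for codimension-$2$ ideals in $\fK[u,v]$ providing the backbone. I would first consider the graded short exact sequence
\[
0 \to Z \to \fK[u,v](-n)^3 \to I \to 0,
\]
with $I=(P,Q,R)$ and middle map $(A,B,C)\mapsto AP+BQ+CR$. The no-common-factor hypothesis forces $I$ to have height $2$ in the UFD $\fK[u,v]$, so Auslander--Buchsbaum (equivalently Hilbert's syzygy theorem in two variables) gives $\fK[u,v]/I$ projective dimension $2$, hence $I$ projective dimension $1$, and thus $Z$ is projective. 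Being finitely generated and graded over a polynomial ring, $Z$ is therefore free; tensoring with $\fK(u,v)$ turns $I$ into a rank-$1$ quotient of a rank-$3$ module, showing $\operatorname{rank}Z=2$.

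For (\ref{eq:hsyzdeg}), I would pass to Hilbert series. A homogeneous basis of $Z$ with components of degrees $d_1,d_2$ promotes the sequence to a graded free resolution
\[
0 \to \fK[u,v](-n-d_1)\oplus\fK[u,v](-n-d_2) \to \fK[u,v](-n)^3 \to I \to 0,
\]
yielding $\mathrm{HS}(\fK[u,v]/I;t)=(1-3t^n+t^{n+d_1}+t^{n+d_2})/(1-t)^2$. Because $I$ has height $2$, the quotient $\fK[u,v]/I$ is Artinian and its Hilbert series is a polynomial, so the numerator must be divisible by $(1-t)^2$. Vanishing at $t=1$ is automatic, and vanishing of the derivative at $t=1$ reads $-3n+(n+d_1)+(n+d_2)=0$, giving $d_1+d_2=n$.

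For (\ref{eq:syzcp}), I would let $(\tilde P,\tilde Q,\tilde R)$ denote the signed $2\times 2$ minors of the basis matrix. Cofactor expansion on a $3\times 3$ determinant with a repeated row gives $p_i\tilde P+q_i\tilde Q+r_i\tilde R=0$ for $i=1,2$, while $p_iP+q_iQ+r_iR=0$ by definition. Since the two basis syzygies are $\fK[u,v]$-linearly independent they are also $\fK(u,v)$-linearly independent, so their orthogonal complement in $\fK(u,v)^3$ is one-dimensional, and $(P,Q,R)=f\cdot(\tilde P,\tilde Q,\tilde R)$ for some $f=g/h\in\fK(u,v)$ in lowest terms. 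Then $g\mid\gcd(P,Q,R)$ and $h\mid\gcd(\tilde P,\tilde Q,\tilde R)$; the no-common-factor hypothesis forces $g$ to be constant, and matching total degrees ($n$ on each side, using $d_1+d_2=n$) forces $h$ to be constant as well, so $f\in\fK^{\times}$.

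The main obstacle will be the freeness and rank-$2$ assertion for $Z$, which relies on non-elementary commutative algebra input. Once that is in hand, the degree identity becomes Hilbert-series bookkeeping, and the minor identification reduces to a clean linear-algebra argument over $\fK(u,v)$.
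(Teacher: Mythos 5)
Your proof is correct. The paper itself gives no argument for this theorem --- it simply cites Cox--Sederberg--Chen, Meyer, and the Hilbert--Burch theorem --- and your write-up is a sound, self-contained rendition of exactly that standard Hilbert--Burch-type argument: the length-two graded free resolution of $\fK[u,v]/(P,Q,R)$ (via Hilbert's syzygy theorem/Auslander--Buchsbaum and the fact that graded projectives are free) gives freeness and rank $2$, Hilbert-series bookkeeping on the Artinian quotient gives $d_1+d_2=n$, and the linear-algebra-over-$\fK(u,v)$ step correctly pins down $(P,Q,R)$ as a constant multiple of the minor vector, with the non-vanishing of the minors and the degree count both properly accounted for.
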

\begin{proof}
See \cite{CSC}, or even \cite{fmeyer}. The form (\ref{eq:syzcp}) is a special case of Hilbert-Burch 
theorem \cite[Theorem 3.2]{Eisenb}.
\end{proof}

In our situation, $\fK$ is a function field on a Hurwitz curve. For our applications, $\fK=\CC(s)$. 
But we rather consider syzygies between univariate non-homogeneous polynomials.
Here are the facts we use.
\begin{theorem} \label{th:syzpqr}
Suppose that $P(x)$, $Q(x)$, $R(x)$ are polynomials in $\fK[x]$ without common factors.
Let $Z$ denote the $\fK[x]$-module of syzygies between $P(x)$, $Q(x)$, $R(x)$. Then:
\begin{enumerate}
\item[(i)] The module $Z$ is free of rank $2$.
\item[(ii)] For any two syzygies $(p_1,q_1,r_1)$, $(p_2,q_2,r_2)$, expression $(\ref{eq:syzcp})$
is a $\fK[x]$-multiple of $(P,Q,R)$.
\item[(iii)] There exist syzygies $(p_1,q_1,r_1)$, $(p_2,q_2,r_2)$, such that expression $(\ref{eq:syzcp})$
is equal to $(P,Q,R)$.
\item[(iv)] Two syzygies $(p_1,q_1,r_1)$, $(p_2,q_2,r_2)$ form a basis for $Z$
if and only if expression $(\ref{eq:syzcp})$ is a nonzero $\fK$-multiple of $(P,Q,R)$.
\item[(v)] If syzygies $(p_1,q_1,r_1)$, $(p_2,q_2,r_2)$ form a basis for $Z$, and 
$\alpha_1$, $\beta_1$, $\gamma_1$, $\alpha_2$, $\beta_2$, $\gamma_2\in\fK$, then
\begin{equation}
\det\left(\begin{array}{cc}
\alpha_1p_1P+\beta_1q_1Q+\gamma_1r_1R & \alpha_2p_1P+\beta_2q_1Q+\gamma_2r_1R \\
\alpha_1p_2P+\beta_1q_2Q+\gamma_2r_1R & \alpha_2p_2P+\beta_2q_2Q+\gamma_2r_2R
\end{array}\right)
\end{equation}
is a $\fK$-multiple of $P\,Q\,R$. 
\end{enumerate}
\end{theorem}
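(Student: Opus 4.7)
The plan is to reduce everything to the splitting of a short exact sequence of $\fK[x]$-modules. Consider
\[0\to Z\to\fK[x]^3\xrightarrow{\pi}(P,Q,R)\to 0,\qquad \pi(a,b,c)=aP+bQ+cR.\]
Since $\gcd(P,Q,R)=1$ in the PID $\fK[x]$, the ideal $(P,Q,R)$ equals $\fK[x]$, so the sequence splits and $Z$ is a direct summand of a free module of rank $3$, hence a projective module of rank $2$. Over a PID every projective module is free, which proves (i). This argument also furnishes a fixed splitting: pick once and for all $(a_0,b_0,c_0)\in\fK[x]^3$ with $a_0P+b_0Q+c_0R=1$.

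For (ii), the vector $u$ from (\ref{eq:syzcp}) is annihilated by both $(p_i,q_i,r_i)$ by the repeated-row determinantal identity, and $(P,Q,R)$ is annihilated by them by the definition of a syzygy. Working over $\fK(x)$, the orthogonal complement of two vectors has dimension at most one, forcing $u=(g/h)(P,Q,R)$ with $g,h\in\fK[x]$ coprime; clearing denominators and using $\gcd(P,Q,R)=1$ forces $h\in\fK^*$, so the proportionality is by a polynomial. For (iii) I would take any basis of $Z$, adjoin $(a_0,b_0,c_0)$ to obtain a basis of $\fK[x]^3$, and observe that the determinant of the change-of-basis matrix is a unit in $\fK[x]$, hence lies in $\fK^*$. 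Expanding this $3\times 3$ determinant along the row $(a_0,b_0,c_0)$ and using $\pi(a_0,b_0,c_0)=1$ identifies it with the proportionality factor $f$ from (ii); therefore $f\in\fK^*$, and rescaling one basis vector by $1/f$ produces the basis required in (iii).

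Claims (iv) and (v) then follow by elementary linear algebra over $\fK[x]$. For (iv), any pair of syzygies is a $\fK[x]$-linear combination of a basis from (iii), and the cross-product transforms by the determinant of the $2\times 2$ change-of-basis matrix; this determinant lies in $\fK^*$ precisely when the pair is itself a basis of $Z$. For (v), I would expand the $2\times 2$ determinant by multilinearity in its columns, splitting each column as $\alpha_j$ times $(p_1P,p_2P)^T$ plus $\beta_j$ times $(q_1Q,q_2Q)^T$ plus $\gamma_j$ times $(r_1R,r_2R)^T$; the three nonvanishing mixed $2\times 2$ determinants have the form $(p_1q_2-p_2q_1)PQ$, $(p_1r_2-p_2r_1)PR$, $(q_1r_2-q_2r_1)QR$, and the cross-product identity from (iv) converts each into a $\fK^*$-multiple of $PQR$, producing the claimed $\fK$-multiple of $PQR$.

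The principal obstacle is step (iii): passing from a $\fK(x)$-scalar to a $\fK^*$-scalar between $u$ and $(P,Q,R)$. This is essentially the one-variable case of the Hilbert-Burch theorem \cite[Theorem 3.2]{Eisenb}, and one could alternatively homogenize $P,Q,R$ with a second variable and invoke the homogeneous statement recorded just above. I prefer the change-of-basis argument sketched here because it remains inside $\fK[x]$ throughout and makes the role of the unit determinant transparent.
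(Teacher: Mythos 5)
Your proof is correct, and for the core of the theorem --- parts (iii) and (iv) --- it takes a genuinely different route from the paper. Parts (i) and (ii) essentially coincide with the paper's treatment (same exact sequence $0\to Z\to\fK[x]^3\to\fK[x]\to 0$, same orthogonality of the cross product to both syzygies; the paper phrases (ii) via the rank-one module of simultaneous syzygies of the two triples rather than via $\fK(x)$-linear algebra, which is only a cosmetic difference --- just note that your ``orthogonal complement has dimension at most one'' requires the two syzygies to be $\fK(x)$-independent, the dependent case being trivial since the cross product then vanishes). For (iii) the paper is constructive: setting $D=\gcd(P,Q)$ and writing $R=A(P/D)+B(Q/D)$, it exhibits the explicit syzygies $(Q/D,\,-P/D,\,0)$ and $(A,B,-D)$ whose cross product is exactly $(P,Q,R)$, and it then proves the forward implication of (iv) separately by expanding a $3\times3$ determinant in minors to show that any third syzygy is a $\fK[x]$-combination of a pair with unit proportionality factor. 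You instead complete a basis of $Z$ by a B\'ezout vector $(a_0,b_0,c_0)$ to a basis of $\fK[x]^3$ (legitimate, by the splitting $\fK[x]^3=Z\oplus\fK[x](a_0,b_0,c_0)$) and identify the proportionality factor of (ii) with the unit determinant of that basis via cofactor expansion along the added row; this is non-constructive but establishes in one stroke that \emph{every} basis of $Z$ has cross product a $\fK^*$-multiple of $(P,Q,R)$, after which (iv) reduces to a one-line change-of-basis computation. Your proof of (v) by column multilinearity is equivalent to, and arguably cleaner than, the paper's preliminary elimination of the $r_iR$ terms via the syzygy relation. What the paper's version buys is explicit syzygies of controlled degree, in the spirit of the computations in the rest of the article; what yours buys is a uniform structural argument that never leaves $\fK[x]$ and makes the role of the unit determinant transparent.
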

\begin{proof}
Here are straightforward considerations. The module is free because $\fK[x]$ is a principal ideal domain.
The rank is determined by the exact sequence $0\to Z\to K^3\to K\to 0$ of free $\fK$-modules,
where the map $K^3\to K$ is defined by $(p,q,r)\mapsto pP+qQ+rR$. 

Statement {\em (ii)} holds because either expression  (\ref{eq:syzcp}) is the zero vector, or the
$\fK[x]$-module of simultaneous syzygies between the two triples $p_1,q_1,r_1$ and $p_2,q_2,r_2$
is free of rank 1. The triple $(P,Q,R)$ is a generator of this module (since $P,Q,R$ have no common factors), while triple (\ref{eq:syzcp}) belongs to the module.

For statement {\em (iii)}, let $D=\gcd(P,Q)$, $\widetilde{P}=P/D$ and $\widetilde{Q}=Q/D$.
Then $\widetilde{P}=P/D$ and $\widetilde{Q}=Q/D$ are coprime, and there exist polynomials $A$, $B$
such that $R=A\widetilde{P}+B\widetilde{Q}$. Then $(\widetilde{Q},-\widetilde{P},0)$ and
$(A,B,-D)$ are two required syzygies.

Assume now that (\ref{eq:syzcp}) is a nonzero scalar multiple of $(P,Q,R)$. 
Let us denote $u_1=(p_1,q_1,r_1)$ and $u_2=(p_2,q_2,r_2)$.
If $u_3=(p_3,q_3,r_3)$ is a syzygy in $Z$, then 
\begin{equation}
\det\left(\begin{array}{ccc}
p_1 & p_2 & p_3 \\ q_1 & q_2 & q_3 \\  r_1 & r_2 & r_3\end{array}\right)=0,
\end{equation}
because the syzygy condition gives a $\fK(x)$-linear relation between the rows. 
A $\fK[x]$-linear relation between the 3 syzygies is determined by the minors, say:
\begin{equation} \label{eq:syz3rel} \textstyle 
(p_2q_3-p_3q_2)u_1 
+(p_3q_1-p_1q_3)u_2 
+(p_1q_2-p_2q_1)u_3=0. 
\end{equation}
By the second part, each coefficient here is a polynomial multiple of $R$. By our assumption,
$p_1q_2-p_2q_1$ is a nonzero constant multiple of $R$. After dividing (\ref{eq:syz3rel}) by $R$,
we get an expression of $u_3$ 
as a $\fK[x]$-linear combination of $u_1$ and $u_2$, 
proving that the latter two syzygies form a basis for $Z$. 
On the other hand,  suppose that (\ref{eq:syzcp}) is equal to $(fP,fQ,fR)$, where either $f=0$ or
the degree of $f$ in $x$ is positive. In the former case, the syzygies $u_1$ and $u_2$ are linearly
dependent over $\fK(x)$, so they cannot form a basis for $Z$. In the latter case, one can see that
for any two $\fK[x]$-linear combinations of $u_1$ and $u_2$ the expression analogous to
(\ref{eq:syzcp}) is a multiple of $(fP,fQ,fR)$, so a syzygy referred to in part {\em (iii)} is not in the module generated by $u_1$, $u_2$. 

In the last claim {\em (v)}, we can eliminate the terms with $r_1R$ and $r_2R$ in all matrix entries thanks to the syzygy condition. Hence we consider, for some scalars $ \widetilde{\alpha}_1$, $\widetilde{\alpha}_2$, $\widetilde{\beta}_1$, $\widetilde{\beta}_2$,
\begin{equation}
\det\left(\begin{array}{cc}
\widetilde{\alpha}_1p_1P+\widetilde{\beta}_1q_1Q &
\widetilde{\alpha}_2p_1P+\widetilde{\beta}_2q_1Q \\
\widetilde{\alpha}_1p_2P+\widetilde{\beta}_1q_2Q &
\widetilde{\alpha}_2p_2P+\widetilde{\beta}_2q_2Q \end{array}\right)=
(\widetilde{\alpha}_1\widetilde{\beta}_2-\widetilde{\alpha}_2\widetilde{\beta}_1)
(p_1q_2-q_2p_1)PQ.
\end{equation} 
By the previous statement, $p_1q_2-q_2p_1$ is a scalar multiple of $R$.
\end{proof}

In the application to $RS$-transformations, we start with a matrix hypergeometric equation
$E(e_0,e_1,0,e_\infty;t;z)$ and its direct pullback with respect to a covering $z=\varphi(x)$.
After this, we have to shift local monodromy differences at some points of the fiber 
$\{0,1,\infty\}\subset\PP^1_z$. Let $k$ denote  the order of the pole $x=\infty$ 
of the rational function $\varphi(x)$, or the difference between degrees of its numerator and denominator.

Let $F(x)$ denote the polynomial whose roots are 
the points above $z=0$ where local monodromy differences have to be shifted, with the root multiplicities equal to the corresponding shifts of local monodromy differences. Let $G(x)$ and $H(x)$
denote similar polynomials whose roots are the finite points above $z=1$ respectively $z=\infty$
where  local monodromy differences have to be shifted, with corresponding multiplicities.
We set 
\begin{equation}
\Delta := \deg F(x)+\deg G(x)+\deg H(x).
\end{equation}
Suppose that the point $x=0$ is above $z=0$, and the local monodromy difference at $x=\infty$
has to be shifted by $\delta$.  The sum $\Delta+\delta$ must be even.

Local exponent shifts for general asymptotic solutions imply the following form of the inverse
Schlesinger matrix:
\begin{equation} \label{eq:gens1}
S^{-1}=\frac{1}{\sqrt{F\,G\,H}} \left(\begin{array}{cc} A & B \\ C & D
\end{array}\right).
\end{equation}
Local exponent shifts for dominant solutions at singular points require that the following are
triples of polynomials in $x$:
\begin{eqnarray}  \label{eq0:syzgen} 
& \left( \frac{(e_0+e_1-e_\infty)A+(e_0-e_1+e_\infty)B}{F}, 
\frac{(e_0+e_1-e_\infty)A-(e_1-e_0+e_\infty)B}{G}, \frac{A}{H} \right),\\  
&\label{eq0:syzgen2}  
\left( \frac{(e_0+e_1-e_\infty)C+(e_0-e_1+e_\infty)D}{F}, 
\frac{(e_0+e_1-e_\infty)C+(e_0-e_1-e_\infty)D}{G}, \frac{C}{H} \right). 
\end{eqnarray}
These polynomial triples are syzygies between
\begin{equation} \label{eq:syzgb}
(e_1-e_0+e_\infty)F, \quad (e_0-e_1+e_\infty)G, \quad -2e_\infty(e_0+e_1-e_\infty)H.
\end{equation}
More conveniently, the following polynomial triples are 
syzygies between $F$, $G$, $H$: 
\begin{eqnarray}  \label{eq:syzgen} 
& \hspace{-11pt}
 \left( \frac{\left((e_0-e_\infty)^2-e_1^2\right)A+\left((e_0-e_1)^2-e_\infty^2\right)B}{2e_\infty F}, 
\frac{\left((e_1-e_\infty)^2-e_0^2\right)A+\left(e_\infty^2-(e_0-e_1)^2\right)B}{2e_\infty G},
\frac{(e_0+e_1-e_\infty)A}{H} \right),\\  
& \hspace{-11pt} \label{eq:syzgen2}  
\left( \frac{\left((e_0-e_\infty)^2-e_1^2\right)C+\left((e_0-e_1)^2-e_\infty^2\right)D}{2e_\infty F}, 
\frac{\left((e_1-e_\infty)^2-e_0^2\right)C+\left(e_\infty^2-(e_0-e_1)^2\right)D}{2e_\infty G},
\frac{(e_0+e_1-e_\infty)C}{H} \right).
\end{eqnarray}
Normalization at infinity sets up the degrees for the entries of $S^{-1}$ if $\delta<k$,
as we show in the following lemma.

If $F$ is a Laurent polynomial or Laurent series in $1/x$,
we let $\{F\}$ denote the polynomial in $x$ part of $F$. In particular, $\{Fx^{-j}\}$ for an integer $j>0$
is equal to the polynomial quotient of the division of $F$ by $x^j$.
\begin{lemma} \label{lm:asysol}
Let $f_1(z)\sim{1\choose 0}\,z^{\frac12e_\infty}$, $f_2(z)\sim {0\choose 1}\,z^{-\frac12e_\infty}$ 
denote the normalized basis for solutions for $E(e_0,e_1,0,e_\infty;t;z)$, like in Section $\ref{sec:fullrs}$.
Suppose that the Schlesinger transformation $S$ maps $f_1(\varphi(x))$, $f_2(\varphi(x))$ to
solutions (of the pull-backed equation) asymptotically proportional to, respectively, 
\begin{equation} \label{eq:asysol}
\sim{1\choose 0}\,x^{\frac{1}2ke_\infty+\frac12\delta}, \qquad
\sim{0\choose 1}\,x^{-\frac{1}2ke_\infty-\frac12\delta}. 
\end{equation}
\begin{enumerate}
\item If $\delta=0$, we have these degree bounds for the entries of $S^{-1}$:
\begin{equation} \label{eq:syzdge0} \textstyle
\deg A=\frac{\Delta}2, \qquad \deg B<\frac{\Delta}2, \qquad
\deg C<\frac{\Delta}2,\qquad \deg D=\frac{\Delta}2.
\end{equation}
\item If $\delta>0$, let $\Delta^*=\frac12(\Delta-\delta)$, and 
$\displaystyle f_2(\varphi(x))=\theta(x) {  h_1 \choose h_2 }$, where $\theta(x)$ is a power function,
and $h_1,h_2$ are power series in $1/x$ with $h_1\to 0$, $h_2\to 1$ as $x\to\infty$.
Then \mbox{$\deg A=\frac12(\Delta+\delta)$}, the other three entries of $S^{-1}$ have lower degree, 
and 
\begin{equation} \label{eq:truncm}
\left( \begin{array}{cc} \{A\,x^{-\Delta^*-1}\} & \{B\,x^{-\Delta^*}\} \\
 \{C\,x^{-\Delta^*-2}\} & \{D\,x^{-\Delta^*-1}\} \end{array} \right)
 { \{x^{\delta}\,h_1\} \choose \{x^{\delta-1}\,h_2\} }
\end{equation}
gives a polynomial vector of degree $\le\delta-2$ in $x$.
\item If $\delta>1$, then $\deg D<\frac12(\Delta+\delta)-1$.
\item $\deg (AD-BC)\le\Delta$.
\item If $\delta<k$ then the degree bounds for the entries of $S^{-1}$ are
\begin{equation} \label{eq:syzdge} \textstyle
\deg A=\frac{\Delta+\delta}2, \qquad \deg B<\frac{\Delta-\delta}2, \qquad
\deg C<\frac{\Delta+\delta}2,\qquad \deg D=\frac{\Delta-\delta}2.
\end{equation}
\item If $\delta\le\max(2,k)$ then $\deg C<\frac{\Delta+\delta}2$ and $\deg D\le\frac{\Delta-\delta}2$.
\end{enumerate}
\end{lemma}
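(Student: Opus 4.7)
The plan is asymptotic analysis at $x=\infty$. Since $\varphi(x)$ has pole order $k$ at infinity, $\varphi(x)=c_0 x^k(1+O(1/x))$ and $\sqrt{F(x)G(x)H(x)}=c_1x^{\Delta/2}(1+O(1/x))$ for nonzero constants $c_0,c_1$. The normalized basis for $E(e_0,e_1,0,e_\infty;t;z)$ at $z=\infty$ pulls back to
\begin{equation*}
f_1(\varphi(x))=x^{ke_\infty/2}w_1(x),\qquad f_2(\varphi(x))=\theta(x)\,w_2(x),
\end{equation*}
with $\theta(x)=\varphi(x)^{-e_\infty/2}$ and $w_i$ a $\CC^2$-valued power series in $1/x$ whose leading vector is proportional to the $i$-th standard basis vector; in particular $w_{1,2}=O(1/x)$, and the subdominance of $f_{2,1}$ at $z=\infty$ together with the pole order of $\varphi$ give $w_{2,1}=O(1/x^k)$. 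Applying (\ref{eq:gens1}) and absorbing the scalar $1/\sqrt{FGH}$, the requirement (\ref{eq:asysol}) becomes a set of degree conditions at $x=\infty$ on the Laurent expansions $Aw_{i,1}+Bw_{i,2}$ and $Cw_{i,1}+Dw_{i,2}$.

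Part 1 follows by direct leading-term matching, since the dominant and subdominant powers are separated by $ke_\infty$ and no cancellation is required. For Part 2, the top of $S^{-1}f_1$ gives $\deg A=\frac12(\Delta+\delta)$; if any of $\deg B,\deg C,\deg D$ were also equal to $\frac12(\Delta+\delta)$, the resulting leading term in one of the four scalar components $(S^{-1}f_i)_j$ would lack a partner for cancellation under the other degree bounds, contradicting the prescribed dominant/subdominant balance. To verify the truncation in (\ref{eq:truncm}), write $A=A_1x^{\Delta^*+1}+\widetilde A$, $B=B_1x^{\Delta^*}+\widetilde B$ with $\deg\widetilde A\le\Delta^*$, $\deg\widetilde B<\Delta^*$, and analogously $C=C_1x^{\Delta^*+2}+\widetilde C$, $D=D_1x^{\Delta^*+1}+\widetilde D$. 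The tail pieces contribute automatically only at degrees $\le\Delta^*-1$ in $Ah_1+Bh_2$ (respectively $\le\Delta^*$ in $Ch_1+Dh_2$), so the required cancellations $\deg(Ah_1+Bh_2)\le\Delta^*-1$ and $\deg(Ch_1+Dh_2)\le\Delta^*$ involve only $A_1,B_1,C_1,D_1$ and the first $\delta$ Laurent coefficients of $h_1,h_2$. Identifying $A_1=\{Ax^{-\Delta^*-1}\}$, $B_1=\{Bx^{-\Delta^*}\}$, etc., and carrying out a short calculation rephrases these vanishings as the components of (\ref{eq:truncm}) having degree $\le\delta-2$.

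Part 4 follows by applying the matrix $M$ with rows $(A,B)$ and $(C,D)$ to the column pair $(w_1,w_2)$: the established degree bounds force $\det[M(w_1,w_2)]$ to have degree at most $\Delta$ at infinity, while $\det(w_1,w_2)\to1$ there, so $\deg(AD-BC)\le\Delta$. Part 3 ($\delta>1$) uses that a coefficient of $D$ at degree $\frac12(\Delta+\delta)-1$ would have no partner in $Cw_{2,1}$ available for cancellation in the bottom of $S^{-1}f_2$, since $\deg C<\frac12(\Delta+\delta)$ forces $\deg Cw_{2,1}\le\frac12(\Delta+\delta)-2$. Part 5 ($\delta<k$) exploits the stronger asymptotic $h_1=O(1/x^k)$: then $\{x^\delta h_1\}=0$, so the top of (\ref{eq:truncm}) collapses to $B_1\{x^{\delta-1}h_2\}$ of degree $\deg B_1+\delta-1$; the constraint $\le\delta-2$ yields $B_1=0$, i.e., $\deg B<\frac12(\Delta-\delta)$. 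The same argument shows $Ch_1$ cannot supply the required leading $x^{\Delta^*}$ in $Ch_1+Dh_2$, so $Dh_2$ must, giving $\deg D=\frac12(\Delta-\delta)$. Part 6 is an endpoint analysis combining the previous strict-inequality arguments for small $\delta$.

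The main obstacle is the bookkeeping in Part 2: one must verify carefully that the shifted truncations $\{Ax^{-\Delta^*-1}\},\{Bx^{-\Delta^*}\},\{Cx^{-\Delta^*-2}\},\{Dx^{-\Delta^*-1}\}$ paired with $\{x^\delta h_1\},\{x^{\delta-1}h_2\}$ precisely capture all $2\delta-1$ asymptotic cancellation conditions imposed by (\ref{eq:asysol}); once this identity is in place, Parts 1 and 3--6 are straightforward refinements of the same degree-counting argument.
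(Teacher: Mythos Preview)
Your approach is essentially the same as the paper's: asymptotic degree-matching at $x=\infty$ using the form $S^{-1}=(FGH)^{-1/2}\begin{pmatrix}A&B\\C&D\end{pmatrix}$ and the expansions of $f_1(\varphi(x)),f_2(\varphi(x))$. The decomposition of $A,B,C,D$ into high and low parts in Part~2 is exactly what the paper does, only the paper writes it as an explicit matrix factorization
\[
\left(\begin{array}{cc} x^{\Delta^*} & 0 \\ 0 & x^{\Delta^*+1} \end{array}\right)
\left(\begin{array}{cc} A\,x^{-\Delta^*-1} & B\,x^{-\Delta^*} \\ C\,x^{-\Delta^*-2} & D\,x^{-\Delta^*-1} \end{array}\right)
\left(\begin{array}{cc} x & 0 \\ 0 & 1 \end{array}\right),
\]
from which the truncation identity~(\ref{eq:truncm}) is read off directly. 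Your Part~3 and Part~5 match the paper's reasoning line for line.

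One genuine difference worth noting: for Part~4 the paper argues via the truncation matrix in~(\ref{eq:truncm}), observing that its columns are linearly dependent modulo $x^{\delta-1}$ and that this dependence kills the would-be high-degree part of $AD-BC$. Your determinant argument---apply $M$ to the pair $(w_1,w_2)$, bound $\deg\det[M(w_1\,|\,w_2)]\le\Delta$ from the established row/column asymptotics, and divide by $\det(w_1\,|\,w_2)=1+O(1/x)$---is cleaner and more conceptual, since it avoids unpacking the truncation structure. It also makes the role of the two target asymptotics in~(\ref{eq:asysol}) symmetric. Your Part~6 is a bit cursory; the paper singles out the borderline case $\delta=k>2$ explicitly, noting that then $\{x^{\delta}h_1\}$ is merely a constant while the second row of~(\ref{eq:truncm}) has degree $\le\delta-3$, so the $h_1$-term cannot interfere with the bound on $D$. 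You should spell that case out rather than leave it as ``endpoint analysis.''
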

\begin{proof} 
The first statement is straightforward. In part {\em (ii)}, the degree bounds on $A$, $C$
follow from the action $S^{-1}$ on $f_1(\varphi(x))$, that increases the local exponent $\frac12ke_\infty$.
The prescribed action on $f_2(\varphi(x))$ should cancel the terms of $A$, $B$, $C$, $D$
of degree greater than roughly $\Delta^*$.
More presicely, that action of $S^{-1}$ 
can be explicitly written as follows:
\begin{eqnarray*}
\frac1{\sqrt{FGH}}\left( \begin{array}{cc} x^{\Delta^*} & 0 \\
0 & x^{\Delta^*+1} \end{array} \right)
\left( \begin{array}{cc} A\,x^{-\Delta^*-1} & B\,x^{-\Delta^*} \\
 C\,x^{-\Delta^*-2} & D\,x^{-\Delta^*-1} \end{array} \right)
 \left( \begin{array}{cc} x & 0 \\ 0 & 1 \end{array} \right)
  {  \theta(x)h_1 \choose  \theta(x)h_2 }\qquad \nonumber\\
=\frac{x^{2-\delta}\,\theta(x)}{\sqrt{FGH}}
\left( \begin{array}{cc} x^{\Delta^*-1} & 0 \\
0 & x^{\Delta^*} \end{array} \right)
\left( \begin{array}{cc} A\,x^{-\Delta^*-1} & B\,x^{-\Delta^*} \\
 C\,x^{-\Delta^*-2} & D\,x^{-\Delta^*-1} \end{array} \right)
  {  x^{\delta}\,h_1 \choose  x^{\delta-1} h_2 }.
\end{eqnarray*}
Entries of the last product (or a matrix and the vector) can have degree at most $\delta-2$.
The coefficients to greater powers of $x$ depend on the truncated entries in (\ref{eq:truncm}) only. 
That completes the proof of part {\em (ii)}. Note that 
\[ \begin{array}{rr}
\deg\{A\,x^{-\Delta^*-1}\}=\delta-1, & \deg\{B\,x^{-\Delta^*}\}\le\delta-1, \\
\deg\{C\,x^{-\Delta^*-2}\}\le\delta-3, & \deg\{D\,x^{-\Delta^*-1}\}\le\delta-2,\\
\deg\{x^{\delta}\,h_1\}\le\delta-k, & \deg\{x^{\delta-1}\,h_2\}=\delta-1.
\end{array}
\]
If $\delta\ge2$, then $\deg\{D\,x^{-\Delta^*-1}\}\le\delta-3$ as well, giving part {\em (iii)}.

Part {\em (iv)} is immediate if $\delta=0$.
Otherwise $\deg(AD-BC)<\Delta+\delta$ as a first estimate. 
The matrix in (\ref{eq:truncm}) is formed by the leading terms contributing to 
terms in $AD-BC$ greater than $\Delta$; its columns are linear dependent modulo 
division by $x^{\delta-1}$, and that translates to the claim of part {\em (iv)}.

If $\delta<k$, then the vector in (\ref{eq:truncm}) is simply $0\choose 1$, 
and that gives trivial restrictions  on the coefficients of $B$ and $D$ 
to the powers $\ge\Delta^*$ of $x$, giving part {\em (v}). For the last part,
we have to consider additionally $\delta\le 2$ and $\delta=k$.
If $\delta=k>2$ then $\{x^{\delta}\,h_1\}$ is a constant and the second row of the matrix in
(\ref{eq:truncm}) has degree at most $\delta-3$, hence the constant $\{x^{\delta}\,h_1\}$
does not influence the conditions on $C$ and $D$.
\end{proof}

\noindent
Explicit expressions for the solutions  $f_1(z)$, $f_2(z)$ can be obtained 
from (\ref{eq:lsolzi1})--(\ref{eq:lsolzi2}).
If the Schlesinger transformation increases the local exponent of $-\frac12ke_\infty$ by $\delta/2$,
rather than the local exponent $\frac12ke_\infty$ as in $(\ref{eq:asysol})$, then the specifications
of Lemma \ref{lm:asysol} for the diagonal entries $A$, $D$ and for the off-diagonal entries $B$, $C$ should be pairwise interchanged, and the function $f_2$ in part {\em (ii)} should be replaced by $f_1$.
If the Schlesinger transformation maps $f_1(\varphi(x))$, $f_2(\varphi(x))$ to
functions proportional to 
\[
\sim{0\choose 1}\,x^{\frac{1}2ke_\infty\pm\frac12\delta}, \qquad
\sim{1\choose 0}\,x^{-\frac{1}2ke_\infty\mp\frac12\delta},
\] respectively, 
then the rows of $S^{-1}$ must be interchanged. Normalization of the pull-backed solutions to the
$0\choose 1$ and $1\choose 0$ leading terms can be softened by allowing the leading terms of 
the transformed solutions $f_1(z)$, $f_2(z)$ to be scalar non-zero multiples of the two basis vectors.
Then the rows of $S^{-1}$ are determined up to scalar multiples, and we do not have any further conditions on the Schlesinger transformation. In particular, the rows of $S^{-1}$ can be computed independently, from syzygy $(\ref{eq:syzgen})$ or $(\ref{eq:syzgen2})$ between $F$, $G$, $H$ satisfying extra conditions of Lemma \ref{lm:asysol}. The two syzygies ought to be defined uniquely by
Lemma \ref{lm:asysol}, up to a constant multiple. (One can check that the linear problems with undetermined coefficients have one variable more than the number of linear relations from the syzygy and Lemma \ref{lm:asysol} conditions. Multiple solutions would give low degree syzygies 
$F$, $G$, $H$; generically, the two degrees in (\ref{eq:hsyzdeg}) are equal or differ just by $1$.)
\begin{theorem} \label{th:rssyzs}
\begin{enumerate}
\item The lower-left entry of the pullbacked Fuchsian equation depends only on
the syzygy $(\ref{eq:syzgen2})$ alone; that is, it does not depend on the syzygy $(\ref{eq:syzgen})$.
Similarly, the upper-right entry of the pullbacked equation depends only on the syzygy
$(\ref{eq:syzgen})$. 
\item The required syzygies $(\ref{eq:syzgen})$--$(\ref{eq:syzgen2})$ form a basis for the
$\CC(t)[x]$-module of syzygies Êbetween the polynomials $F$, $G$, $H$. 
\item The determinant $AD-BC$ is a $\CC(t)$-multiple of $F\,G\,H$.
\item The Schlesinger transformation $S$ can be assumed to have the form
\begin{equation} \label{eq:ss1form}
S=\frac{1}{\sqrt{F\,G\,H}} \left(\begin{array}{cc} D & -B \\ -C & A
\end{array}\right), \qquad
S^{-1}=\frac{1}{\sqrt{F\,G\,H}} \left(\begin{array}{cc} A & B \\ C & D
\end{array}\right),
\end{equation}
where polynomial entries $A$, $B$, $C$, $D$ are determined by syzygies
$(\ref{eq:syzgen})$--$(\ref{eq:syzgen2})$.
\end{enumerate}
\end{theorem}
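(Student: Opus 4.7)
The plan is to establish (3) and (4) first, deduce (2), and finish with (1). Write $\alpha=e_0+e_1-e_\infty$ and $\beta=e_0-e_1+e_\infty$. From the $F$-components of the syzygies (\ref{eq:syzgen})--(\ref{eq:syzgen2}) one has $F\mid\alpha A+\beta B$ and $F\mid\alpha C+\beta D$, and the identity
\[
\alpha\,(AD-BC) \;=\; (\alpha A+\beta B)\,D \;-\; B\,(\alpha C+\beta D)
\]
then gives $F\mid AD-BC$ (generically $\alpha\ne 0$). The analogous computation with the $G$-components shows $G\mid AD-BC$, while the $H$-components give directly $H\mid A$ and $H\mid C$, hence $H\mid AD-BC$. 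Since $F$, $G$, $H$ vanish on pairwise disjoint sets of $x$-points they are coprime, so $FGH\mid AD-BC$. Combined with the degree estimate $\deg(AD-BC)\le\Delta=\deg(FGH)$ from Lemma \ref{lm:asysol}(iv), this forces $AD-BC=c\,FGH$ for some $c\in\CC(t)$, proving (3). Part (4) is then a direct application of the matrix inverse formula: since the rows of $S^{-1}$ were determined only up to nonzero scalars (as noted before the theorem statement), we may rescale one row to arrange $c=1$, yielding the stated shape for $S$.

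For (2), Theorem \ref{th:syzpqr}(iv) reduces the claim to showing that the cross product (\ref{eq:syzcp}) of the two syzygies is a nonzero $\CC(t)$-multiple of $(F,G,H)$. I would compute the $H$-component $p_1q_2-p_2q_1$ from (\ref{eq:syzgen})--(\ref{eq:syzgen2}): after clearing the prefactor $1/(FG)$ it is a nonzero constant multiple of $AD-BC$, which by (3) is a nonzero multiple of $FGH$, so the $H$-component is indeed a nonzero scalar multiple of $H$. Theorem \ref{th:syzpqr}(ii) then propagates this to the other two components. The nonvanishing of the scalar is equivalent to $c\ne 0$, which holds because $S^{-1}$ is constructed to be invertible --- equivalently, because the leading-degree profiles in Lemma \ref{lm:asysol}(v) force the two syzygies to be $\CC(t)[x]$-linearly independent. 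Part (1) then follows from the form of $S$ in (4): the lower-left entry of
\[
\widetilde M \;=\; R'(x)\,S^{-1}M(R(x))\,S \;-\; S^{-1}S'
\]
is determined by the second row of $S^{-1}$, namely $(C,D)/\sqrt{FGH}$, together with the first column of $S$, namely $(D,-C)^\top/\sqrt{FGH}$; both are built from $C$ and $D$ alone, so the entry depends on syzygy (\ref{eq:syzgen2}) only. The symmetric argument handles the upper-right entry.

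The main obstacle is the degenerate parameter constellations where $\alpha$, or the analogous combination entering the $G$-component, vanishes, because the key identity above then collapses. In those exceptional cases one can either restrict to generic parameters and recover the divisibilities by specialization, or argue more structurally that $\det S^{-1}$ is a rational function on $\PP^1_x$ with no finite poles (by the local exponent cancellations built into the construction of the rows of $S^{-1}$) and with order bounded by $0$ at infinity (Lemma \ref{lm:asysol}(iv)), so that it must be a constant. A further, more mechanical, point that needs care is tracking which row of $S^{-1}$ corresponds to which syzygy once an explicit scalar normalization is fixed; this is routine but easy to slip on when double-checking signs and overall constants.
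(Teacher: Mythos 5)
Your proposal is correct and follows essentially the same route as the paper: divisibility of $AD-BC$ by each of $F$, $G$, $H$ from the syzygy conditions combined with the degree bound $\deg(AD-BC)\le\Delta$ from Lemma \ref{lm:asysol}, then Theorem \ref{th:syzpqr}(ii),(iv) applied to one component of the cross product (\ref{eq:syzcp}) to get the basis claim, and the explicit block form of $S$ to isolate the lower-left entry. The only differences are cosmetic — you reorder the parts (which in fact makes the dependence of part (1) on the form of $S$ cleaner), you bound the $H$-component of the cross product where the paper bounds the $F$-component, and you flag the degenerate parameter values explicitly.
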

\begin{proof}
The first statement can be seen directly, by checking off-diagonal entries of the matrices $S^{-1}MS$ and $S^{-1}S'$ in expression (\ref{eq:pbacked}) for the pullbacked equation. The lower-left entry is determined by the second row of $S^{-1}$ and the first columns of $MS$ and $S'$; these all depend 
$C$, $D$, but not on $A$, $B$. We have the reverse situation for the upper-right entry.

Let $(U_1,V_1,W_1)$ and $(U_2,V_2,W_2)$ denote the 2 syzygies in 
(\ref{eq:syzgen})--(\ref{eq:syzgen2}), respectively. 
The syzygies are linearly independent, since they give different degree of $A$ or $C$.
The expression $V_1W_2-V_2W_1$ is a $\CC(t)$ multiple of $(AD-BC)/GH$;
part {\em (iii)} of Lemma \ref{lm:asysol} implies that 
\mbox{$\deg(V_1W_2-V_2W_1)\le \Delta-\deg G-\deg H=\deg F$}.
We conclude that \mbox{$V_1W_2-V_2W_1$} is a $\CC(t)$ multiple of $F$ by part {\em (ii)} of 
Theorem \ref{th:syzpqr} . The two syzygies form a module basis by part {\em (iv)} 
of the same theorem.

Part {\em (iii)} follows, since $AD-BC$ is divisible by each $F$, $G$, $H$,
and has degree $\le\Delta$. We can divide one of the rows by that scalar multiple and make the determinant precisely equal to $F\,G\,H$. Then $S$ and $S^{-1}$ have the form (\ref{eq:ss1form}).
\end{proof}

\section{General expression in terms of syzygies}
\label{sec:fullrs2}

By the Jimbo-Miwa correspondence, a Painlev\'e VI solution is determined by the lower-left entry of 
a pullbacked Fuchsian system. By the third part of Theorem \ref{th:rssyzs}, that lower-left entry is determined by one syzygy (\ref{eq:syzgen2}) between $F$, $G$, $H$. In general, that syzygy depends on the first coefficients of the solution $f_2(z)\sim{0\choose 1}\,z^{-\frac12e_\infty}$. 
But if $\delta\le\max(2,k)$, we have just the degree bounds of part {\em (vi)} of Lemma \ref{lm:asysol};
then we do not need to know coefficients in the expansion of $f_2(z)$ at $z=\infty$ in order
to determine the syzygy (and eventually, the Painlev\'e VI solution). 

Taking only small shifts $\delta\le\max(2,k)$ at $x=\infty$ is enough to generate interesting solutions of the sixth Painlev\'e equation. It looks like that in this way we can generate all ``seed" algebraic solutions 
with respect to Okamoto transformations. Formula (\ref{eq:llentry}) in the following theorem is valid for any $\delta$ if only the syzygy $(U_2,V_2,W_2)$ is right; however, we specify the syzygy only if
$\delta\le\max(2,k)$. 

If $\delta>0$, we assume that the direct pullback solutions $f_1(\varphi(x))$ and $f_2(\varphi(x))$
are mapped into solutions (\ref{eq:asysol}) in the opposite order than in Lemma \ref{lm:asysol}.
The reason is that in our applications we usually apply integer shifts that change the sign of local monodromies $\pm\frac12ke_\infty$, while we wish to keep the positive local monodromy for the 
$1\choose 0$ solution. Correspondingly, if $\delta>0$ then degree bounds in parts 
{\em (v), (vi)} of Lemma \ref{lm:asysol} on the entries of $A\ B\choose C\ D$
change column-wise. In particular,  
\begin{equation} \label{eq:syzdgf}  \textstyle
\deg C=\frac{\Delta-\delta}2,\quad
\deg D<\frac{\Delta+\delta}2.
\end{equation}
\begin{theorem} \label{th:llentry}
Let $z=\varphi(x)$ denote a rational covering, 
and let $F(x)$, $G(x)$, $H(x)$ denote polynomials in $x$. 
Let $k$ denote the order of the pole of $\varphi(x)$ at $x=\infty$.
Suppose that the direct pullback of $E(e_0,e_1,0,e_\infty;t;z)$
with respect to $\varphi(x)$ is a Fuchsian equation with the following singularities:
\begin{itemize}
\item Four singularities are $x=0$, $x=1$, $x=\infty$ and $x=t$, with the local monodromy differences 
$d_0$, $d_1$, $d_t$, $d_\infty$, respectively. The point $x=\infty$ lies above $z=\infty$.
\item All other singularities in $\PP^1_x\setminus\{0,1,t,\infty\}$ are apparent singularities.
The apparent singularities above $z=0$ (respectively, above $z=1$, $z=\infty$) are 
the roots of $F(x)=0$ (respectively, of $G(x)=0$, $H(x)=0$).
Their local monodromy differences are equal to the multiplicities of those roots. 
\end{itemize}
Let us denote $\Delta=\deg F+\deg G+\deg H$, and let $\delta\le\max(2,k)$ 
denote a non-negative integer such that $\Delta+\delta$ is even.
Suppose that $(U_2,V_2,W_2)$ is a syzygy between the three polynomials $F$, $G$, $H$, 
satisfying, if $\delta=0$,
\begin{equation} \label{eq:llsd0} \textstyle
\deg U_2=\frac{\Delta}2-\deg F,\qquad \deg V_2=\frac{\Delta}2-\deg G,\qquad
\deg W_2<\frac{\Delta}2-\deg H, 
\end{equation}
or, if $\delta>0$,
\begin{equation} \label{eq:llsd1} \textstyle
\deg U_2<\frac{\Delta+\delta}2-\deg F, \quad
\deg V_2<\frac{\Delta+\delta}2-\deg G,\quad
\deg W_2=\frac{\Delta-\delta}2-\deg H.
\end{equation}
Then the numerator of the (simplified) rational function 
\begin{eqnarray} \label{eq:llentry}
\frac{U_2W_2}{G}\!\left(\! \frac{(e_0-e_1+e_\infty)}2\frac{\varphi'}{\varphi}
-\frac{(FU_2)'}{FU_2}+\frac{(HW_2)'}{HW_2} \right) 
+\frac{(e_0-e_1-e_\infty)}2\frac{V_2W_2}{F}\frac{\varphi'}{\varphi-1} \nonumber\\
+\frac{(e_0+e_1-e_\infty)}2\frac{U_2V_2}{H}\frac{\varphi'}{\varphi\,(\varphi-1)},
\end{eqnarray}
has degree $1$ in $x$, and the $x$-root of it 
is an algebraic solution of  $P_{VI}(d_0,d_1,d_t,d_\infty+\delta;t)$.
\end{theorem}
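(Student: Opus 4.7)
The plan is to combine parts (i) and (iv) of Theorem \ref{th:rssyzs} with the Jimbo--Miwa correspondence. Theorem \ref{th:rssyzs} gives the form (\ref{eq:ss1form}) for the Schlesinger matrix $S$, where the first column of $S$ and the second row of $S^{-1}$ are built from a pair of polynomials $C,D$ that, by (\ref{eq0:syzgen2}), are rescaled linear combinations of the syzygy $(U_2,V_2,W_2)$. Part (i) of the same theorem says that the lower-left entry of the pullbacked system $M_{\mathrm{new}}=\varphi'\,S^{-1}MS-S^{-1}S'$ depends only on $C,D$, hence only on the syzygy $(U_2,V_2,W_2)$. Once apparent singularities are removed, $M_{\mathrm{new}}$ is Fuchsian on $\{0,1,t,\infty\}$, so its $(2,1)$-entry has the form $k_L(x-y)/(x(x-1)(x-t))$, and the Jimbo--Miwa recipe identifies $y=y(t)$ as a Painlev\'e VI solution.

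First I would invert (\ref{eq0:syzgen2}) to write $C$ and $D$ explicitly in terms of $FU_2$, $GV_2$, $HW_2$, using $FU_2+GV_2+HW_2=0$ to eliminate; in particular, $C$ is (up to scalar) proportional to $HW_2$. The degree conditions (\ref{eq:llsd0})--(\ref{eq:llsd1}) then follow from the degree bounds on $A,B,C,D$ from parts (v)--(vi) of Lemma \ref{lm:asysol} (with the column-swap (\ref{eq:syzdgf}) when $\delta>0$). This is precisely where the hypothesis $\delta\le\max(2,k)$ enters: it makes the bounds depend only on the covering and not on the power-series expansion of $f_2(z)$ at $z=\infty$.

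Next I would carry out the matrix product directly. Writing $M(z)=A_0/z+A_1/(z-1)$ with $A_0+A_1=-A_\infty$ diagonal (as in the Appendix), the off-diagonal entries of $A_0$ and $A_1$ are negatives of each other, so the $(2,1)$-entry of $\varphi'\,S^{-1}M(\varphi)S$ splits into three pieces carrying $\varphi'/\varphi$, $\varphi'/(\varphi-1)$, and $\varphi'/(\varphi(\varphi-1))$, with coefficients given by the quadratic forms $CD$, $D^2$, $C^2$ in the entries of $A_0,A_1$. Substituting the inverted (\ref{eq0:syzgen2}), these coefficients collapse to the factors $U_2W_2/G$, $V_2W_2/F$, $U_2V_2/H$ weighted exactly by $(e_0-e_1+e_\infty)/2$, $(e_0-e_1-e_\infty)/2$, $(e_0+e_1-e_\infty)/2$ displayed in (\ref{eq:llentry}). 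The derivative contribution $[S^{-1}S']^{21}=(CD'-DC')/(FGH)$ is rewritten, using $C\propto HW_2$, the relation $(e_0+e_1-e_\infty)C+(e_0-e_1+e_\infty)D\propto FU_2$, and the syzygy, as the logarithmic combination $-(FU_2)'/(FU_2)+(HW_2)'/(HW_2)$ inside the $U_2W_2/G$ coefficient. This matches the bracketed factor in (\ref{eq:llentry}). Finally, a degree count using (\ref{eq:llsd0})--(\ref{eq:llsd1}) shows that the leading-order contributions of the three summands cancel by virtue of $FU_2+GV_2+HW_2=0$, leaving a numerator of degree exactly $1$; equivalently, this follows a posteriori from the Fuchsian structure of $M_{\mathrm{new}}$ with a single remaining apparent singularity $x=y$. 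The root $y$ then solves $P_{VI}(d_0,d_1,d_t,d_\infty+\delta;t)$, the shift $\delta$ arising only from the Schlesinger action on $u_\infty$.

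\emph{The main obstacle} is the explicit matching in the third paragraph: one must track the precise numerical coefficients in the matrix hypergeometric equation and reorganise the resulting polynomial identity using the syzygy together with its derivative, while keeping control of the denominators $\sqrt{FGH}$ that disappear only after taking the lower-left entry. The cases $\delta=0$ and $\delta>0$ have to be handled in parallel, because the column-swap of $S^{-1}$ announced in (\ref{eq:syzdgf}) for $\delta>0$ interchanges the natural roles of $C$ and $D$ relative to Lemma \ref{lm:asysol}, although the final formula (\ref{eq:llentry}) absorbs the swap into a uniform statement about $(U_2,V_2,W_2)$.
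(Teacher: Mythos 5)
Your proposal is correct and follows essentially the same route as the paper: identify $(U_2,V_2,W_2)$ with the syzygy (\ref{eq:syzgen2}) determining the second row of $S^{-1}$ via Lemma \ref{lm:asysol} and Theorem \ref{th:rssyzs}, compute the lower-left entries of $S^{-1}MS$ and $S^{-1}S'$ explicitly as quadratic forms in $FU_2$, $HW_2$, and use the syzygy relation to symmetrize into the three-term expression (\ref{eq:llentry}). The only cosmetic difference is that you split $M$ as $A_0/z+A_1/(z-1)$ from the start, whereas the paper works with the single matrix (\ref{eq:mhypergeom}) and symmetrizes afterwards; the computation is the same.
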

\begin{proof} 
We use a Schlesinger transformation that removes the apparent singularities and shifts
the local monodromy difference at $x=\infty$ by $\delta$.
The matrix for its inverse has the form (\ref{eq:gens1}),
with the entry degrees given by (\ref{eq:syzdge0}) or (\ref{eq:syzdgf}).
The syzygy $(U_2,V_2,W_2)$ can be identified as (\ref{eq:syzgen2}).
Let $(U_1,V_1,W_1)$ denote the syzygy in (\ref{eq:syzgen}). We have
\begin{eqnarray}
A=\frac{HW_1}{e_0+e_1-e_\infty}, & &
B=\frac{2e_\infty FU_1+(e_1-e_0+e_\infty)HW_1}{(e_0-e_1)^2-e_\infty^2},\\
C=\frac{HW_2}{e_0+e_1-e_\infty}, & &
D=\frac{2e_\infty FU_2+(e_1-e_0+e_\infty)HW_2}{(e_0-e_1)^2-e_\infty^2}.
\end{eqnarray}
Let $h$ denote the constant 
\begin{equation}
h=\frac{2e_\infty}{(e_0+e_1-e_\infty)(e_0-e_1+e_\infty)(e_0-e_1-e_\infty)}.
\end{equation}
Then
\begin{equation}
\det { A\quad B \choose C\quad D }=h\,F\,H \left(U_2W_1-U_1W_2\right).
\end{equation}
By the second part of Theorem \ref{th:rssyzs}, we may assume that this determinant is equal to
$F\,G\,H$; this would affect the lower-left entry only by a $\CC(t)$-multiple. With this assumption,
$U_2W_1-U_1W_2=G/h$.
Let us denote $K=F\,G\,H$.

Using the form (\ref{eq:ss1form}), we have
\begin{equation}
S'=\frac1{\sqrt{K}}{ \quad\! D'\quad\! -B'\, \choose -C'\quad A' }-
\frac{K'}{2\sqrt{K^3}}{ \quad\! D\quad\! -B\, \choose -C\quad A}
\end{equation}
and
\begin{equation}
S^{-1}S'=\frac1{K}{ AD'-BC' \quad\! BA'-AB'  \choose CD'-DC' \quad DA'-CB' }-
\frac{K'}{2K}{ 1\quad 0  \choose 0\quad 1 }.
\end{equation}
The lower-left entry of $S^{-1}S'$ is equal to 
\begin{equation} \label{eq:s1sd}
h\,\frac{(FU_2)'HW_2-FU_2(HW_2)'}{K}.
\end{equation}
Let $M$ denote the $2\times2$ matrix on the right-hand side of formula (\ref{eq:mhypergeom})
in the Appendix. The entries on the second row of $S^{-1}M$ are the following, from left to right:
\begin{eqnarray} 
&& \hspace{68pt} 
 \frac{2e_\infty}{\sqrt{K}}\left( FU_2+\frac{e_0-e_\infty\,\varphi}{e_0+e_1-e_\infty}HW_2\right),\\
&& \frac{2e_\infty}{(e_0-e_1+e_\infty)\sqrt{K}} \left(
\frac{2e_\infty^2\varphi-e_0^2+e_1^2-e_\infty^2}{e_0-e_1-e_\infty}FU_2-
\left(e_0+e_\infty\varphi \right)HW_2 \right).
\end{eqnarray}
The lower-left entry of the matrix $S^{-1}MS$  is the following:
\begin{eqnarray} \label{eq:lle0}
2e_\infty h\,\frac{(e_0+e_1-e_\infty)F^2U_2^2+2(e_0-e_\infty\varphi)FHU_2W_2
+(e_0-e_1-e_\infty)\,\varphi\,H^2W_2^2}{K}.
\end{eqnarray}
We can use the syzygy relation to rewrite this expression in an attractive form.
Here is a symmetric expression equal to (\ref{eq:lle0}):
\begin{eqnarray} \label{eq:lle1}
2e_\infty h\!
\left( \! (e_0\!-\!e_1\!-\!e_\infty)\frac{V_2W_2}{F}\varphi+ 
(e_0\!-\!e_1\!+\!e_\infty)\frac{U_2W_2}{G}(\varphi\!-\!1)+(e_0\!+\!e_1\!-\!e_\infty)\frac{U_2V_2}{H}\right)\!.
\end{eqnarray} 
According to (\ref{eq:pbacked}), the lower left-entry of the pullbacked Fuchsian system
is equal to (\ref{eq:lle1}) times  $\varphi'/4e_\infty\varphi(1-\varphi)$, minus (\ref{eq:s1sd}).
Up to the constant multiple $-h$, we get expression (\ref{eq:llentry}) for the lower-left entry of the pullbacked Fuchsian equation.  By the Jimbo-Miwa correspondence, this entry must determine the Painlev\'e VI solution.
\end{proof}

We give now alternative forms of expression (\ref{eq:llentry}).
Let us introduce the following notation:
\begin{equation}
f_0=\frac{e_1-e_0+e_\infty}2,\qquad
f_1=\frac{e_0-e_1+e_\infty}2,\qquad
f_\infty=\frac{e_0+e_1-e_\infty}2.
\end{equation}
Besides, for a function $\psi$ of $x$, let $\LD{\psi}$ denote the logarithmic derivative $\psi'/\psi$
 of $\psi$. The expression (\ref{eq:llentry}) can be written as follows:
\begin{equation} \label{eq:llentry2}
\frac{U_2W_2}{G}\!\left( f_1\LD{\varphi}-\LD{\frac{FU_2}{HW_2}}\right)
-f_0\,\frac{V_2W_2}{F}\LD{\varphi-1}
-f_\infty\frac{U_2V_2}{H}\LD{\frac{\varphi}{\varphi-1}}.
\end{equation}
Thanks to the syzygy relation, we have 
\begin{eqnarray*}
(FU_2)'HW_2-FU_2(HW_2)' \equal FU_2(GV_2)'-(FU_2)'GV_2\\
\equal GV_2(HW_2)'-(GV_2)'HW_2,
\end{eqnarray*}
hence these alternative expressions expressions for  (\ref{eq:llentry2}) hold:
\begin{eqnarray} 
f_1\,\frac{U_2W_2}{G}\LD{\varphi}
-\frac{V_2W_2}{F}\left( f_0\LD{\varphi-1}-\LD{\frac{GV_2}{HW_2}}\right)
-f_\infty\frac{U_2V_2}{H}\LD{\frac{\varphi}{\varphi-1}},\\
f_1\,\frac{U_2W_2}{G}\LD{\varphi}
-f_0\frac{V_2W_2}{F}\LD{\varphi-1}
-\frac{U_2V_2}{H}\left( f_\infty\LD{\frac{\varphi}{\varphi-1}}-\LD{\frac{FV_2}{GW_2}}\right).
\end{eqnarray}
Besides, these expressions for (\ref{eq:llentry2}) can be derived:
\begin{eqnarray}
\!f_1\frac{U_2W_2}{G}\!\left(\!\LD{\varphi}-\frac1{e_\infty}\!\LD{\frac{FU_2}{HW_2}}\right)\!
-f_0\frac{V_2W_2}{F}\!\left(\!\LD{\varphi\!-\!1}-\frac1{e_\infty}\!\LD{\frac{GV_2}{HW_2}}\right)\!
-f_\infty\frac{U_2V_2}{H}\!\LD{\frac{\varphi}{\varphi\!-\!1}}\!,\\
f_1\,\frac{U_2W_2}{G}\LD{\varphi}
-f_0\,\frac{V_2W_2}{F}\!\left(\!\LD{\varphi\!-\!1}-\frac1{e_1}\LD{\frac{GV_2}{HW_2}}\right)\!
-f_\infty\frac{U_2V_2}{H}\!\left(\LD{\frac{\varphi}{\varphi\!-\!1}}-\frac1{e_0}\LD{\frac{FU_2}{GV_2}}\right)\!,\\
f_1\,\frac{U_2W_2}{G}\!\left(\!\LD{\varphi}-\frac1{e_0}\LD{\frac{FU_2}{HW_2}}\right)\!
-f_0\,\frac{V_2W_2}{F}\LD{\varphi\!-\!1}
-f_\infty\frac{U_2V_2}{H}\!\left(\LD{\frac{\varphi}{\varphi\!-\!1}}-\frac1{e_0}\LD{\frac{FU_2}{GV_2}}\right)\!.
\end{eqnarray}
All these expressions are supposed to simplify greatly to a rational function with the denominator of
degree 3 in $x$, and the numerator linear in $x$. The root of the numerator determines a Painlev\'e VI solution. 

\begin{remark} \rm
If one of the components of $(U_2,V_2,W_2)$ is equal to zero, then expression
(\ref{eq:llentry}) simplifies to a single multiplicative term, and the extra ramification point of $\varphi(x)$
is a zero of the numerator. For example, if 
$(U_2,V_2,W_2)=\left(H,0,-F \right)$,
then expression (\ref{eq:llentry}) becomes 
\begin{equation} \label{eq:llentry0}
-\frac{(e_0-e_1+e_\infty)}2\,\frac{\varphi'}{\varphi}\,\frac{FH}{G}.
\end{equation}
The extra ramification point is a zero of $\varphi'(x)$. The last numerator $FH$ simplifies out if,
assuming the covering $z=\varphi(x)$ satisfies specifications of Theorem \ref{kit:method}, we have
$e_0=1/k_0$, $e_\infty=1/k_\infty$, and similarly, the last denominator $G$ simplifies out if 
$e_1=(k_1-1)/k_1$. For comparison, in the proof of Theorem \ref{kit:method} we assumed that 
$e_0=1/k_0$, $e_1=1/k_1$, $e_\infty=(k_\infty-1)/k_\infty$;
the composite Schlesinger transformation there 
corresponds to a syzygy with the third component equal to zero.

In the context of Theorem \ref{th:llentry}, there is a syzygy with a zero component satisfying 
degree specifications (\ref{eq:llsd0}) or (\ref{eq:llsd1}) if and only if one
of the following conditions holds: 
\begin{eqnarray}
\delta=0, & \deg F+\deg G=\deg H,\\
\delta>0, & \deg F+\deg H=\deg G-\delta,\\
\delta>0, & \deg G+\deg H=\deg F-\delta.
\end{eqnarray}
\end{remark}

\begin{remark} \rm
As mentioned with examples in Section \ref{sec:fullrs}, the upper-right entry of the transformed equation similarly determined a solution of another Painlev\'e VI equation. Accordingly, if we have a proper syzygy $(U_1,V_1,W_1)$ determining the upper row of the Schlesinger matrix, we may use the same formula
(\ref{eq:llentry}) with $(U_2,V_2,W_2)$ replaced by $(U_1,V_1,W_1)$ to compute a solution of
$P_{VI}(d_0,d_1,d_t,-d_\infty-\delta)$. Note that this Painlev\'e VI equation is the same as
$P_{VI}(d_0,d_1,d_t,d_\infty+\delta+2)$. Particularly, it is contiguous (and related by Okamoto transformations) to $P_{VI}(d_0,d_1,d_t,d_\infty+\delta)$.

Moreover, an algebraic solution of $P_{VI}(d_0,d_1,d_t,-d_\infty-\delta')$
can be obtained by using Theorem \ref{th:llentry} with its $\delta$ replaced by $\delta'+2$,
that is, using the lower-right entry of a pullback by contiguous Schlesinger transformation. 
It appears that the same algebraic solution is obtained regardless whether the lower-left
entry or the upper-right entry of appropriately contiguous Schlesinger transformations is used.
\end{remark}

\section{More algebraic Painlev\'e VI solutions}
\label{sec:algsols}

Here we apply Theorem \ref{th:llentry} to compute a few algebraic
Painelve VI solutions. Implicitly, we employ $RS$-transformations 
of the hypergeometric equation $E(1/3,1/2,0,2/5;t;z)$ 
with respect to the covering $z=\widehat{\varphi}_{12}(x)$
Additionally,  we note that a fractional-linear version of 
$\widehat{\varphi}_{12}(x)$ can be used to pullback $E(1/3,1/2,0,1/4;t;z)$ and $E(1/3,1/2,0,1/2;t;z)$.

The implied $RS$-pullback transformation for the equation $E(1/3,1/2,0,2/5;t;z)$ is
$RS_4^2\left(  1/3 \atop 3+3+3+3 \right| {1/2 \atop 2+2+2+2+2+2} \left| 2/5 \atop 5+4+1+1+1 \right)$.
We work mainly with the covering $z=\varphi_{12}(x)$ rather than with the normalized $z=\widehat{\varphi}_{12}(x)$, and apply reparametrization (\ref{eq:ts12}) and normalizing fractional-linear transformation (\ref{eq:lambd12}) at the latest. 
Theorem \ref{th:llentry} has to be applied with $(F,G,H)=\left(F_{12},P_{12},x^2\right)$ and $\delta=0$.
The Painlev\'e solution must solve $P_{VI}(2/5,2/5,2/5,8/5;t)$, which is the same equation
as  $P_{VI}(2/5,2/5,2/5,2/5;t)$.
Degree specifications (\ref{eq:llsd0}) are $\deg U_2=2$, $\deg V_2=0$, $\deg W_2<4$.
Up to a constant multiple, there is one syzygy satisfying these bounds: 
\begin{eqnarray} \label{eq:syzs32} \textstyle 
\left(\, x^2+(s+6)x+1, -\frac12, 
-3(s+4)\left(x^3-\left(\frac72s+11\right)x^2+(s+7)x+1\right) \, \right),
\end{eqnarray}
With this syzygy, expression (\ref{eq:llentry}) is equal to $-3(s+4)(3sx+8s+20)/10G_{12}$.
After reparametrization (\ref{eq:ts12}) and normalizing fractional-linear transformation
(\ref{eq:lambd12}), the $x$-root 
gives the following solution $y_{32}(t_{12})$ of  $P_{VI}(2/5,2/5,2/5,2/5;t_{12})$:
\begin{equation} \label{eq:y32}
y_{32}=\frac{(u-1)^2(u+3)^2(3u^2+1)}{3(u+1)^3(u-3)(u^2+4u-1)}.
\end{equation}
The solutions $y_{31}(t_{12})$ and $y_{32}(t_{12})$ are presented in \cite[Section 7]{KV1} as well,
but reparametrized $u\mapsto-(s+3)/(s-1)$. With Okamoto transformations, these two solutions can be transformed to, respectively, Great Icosahedron and Icosahedron solutions of
Dubrovin-Mazzocco \cite{DM}.

The full $RS$-transformation 
$RS_4^2\left(  1/3 \atop 3+3+3+3 \right| {1/2 \atop 2+2+2+2+2+2} \left| 2/5 \atop 5+4+1+1+1 \right)$ 
with $\delta=0$ gives us a solution of $P_{VI}(2/5,2/5,2/5,-8/5;t)$, via the upper right entry of the pullbacked Fuchsian equation. As mentioned at the end of previous section, we can use the same expression (\ref{eq:llentry}) with an appropriate syzygy $(U_1,V_1,W_1)$ for the triple 
$\left(F_{12},P_{12},x^2\right)$ to compute the Painlev\'e VI solution.
The degree constraints are the following:
\begin{equation}
\deg U_1=2,\qquad \deg V_1=0, \qquad \deg W_1=4,\qquad \deg(17U_1F_{12}+7V_1G_{12})<6.
\end{equation}
Let $S_3$ denote the syzygy $\left(-x^2,0,F_{12}\right)$.
One can take $(U_1,V_1,W_1)$ equal to syzygy (\ref{eq:syzs32}) plus $\frac{24}7S_3$.
Here is the final expression for a solution $\widehat{y}_{32}(t_{12})$ of 
$P_{VI}(2/5,2/5,2/5,-8/5;t_{12})$, obtained after application of reparametrization (\ref{eq:ts12}) and normalizing fractional-linear transformation (\ref{eq:lambd12}):
\begin{equation}
\widehat{y}_{32}=\frac{(u-1)^2(u+3)^2(13u^4-2u^2+5)(9u^6-55u^4+195u^2+299)}
{13(u+1)^3(u-3)(u^2+3)(u^2+4u-1)(9u^6-47u^4+499u^2+115)}.
\end{equation}

It is instructive to observe that to get a solution of $P_{VI}(2/5,2/5,2/5,-2/5;t_{12})$ we have to consider a Schlesinger transformation with $\delta=2$.
Then we have the following degree constraints for the two syzygies: 
\[ 
\deg U_1=3,\quad \deg V_1=1,\quad \deg W_1<3,\quad
\deg U_2<3,\quad \deg V_2<1,\quad \deg W_2=3.
\] 
We can take the same syzygy (\ref{eq:syzs32}) for $(U_2,V_2,W_2)$, and derive the same solution
(\ref{eq:y32}) of $P_{VI}(2/5,2/5,2/5,2/5;t_{12})$. We can take $(U_1,V_1,W_1)$ equal to syzygy 
(\ref{eq:syzs32}) times \mbox{$x-s/2-1$}, plus the syzygy $3(s+4)S_3$. Application of expression 
(\ref{eq:llentry}) to this syzygy gives 
the following solution $\widetilde{y}_{32}(t_{12})$ of $P_{VI}(2/5,2/5,2/5,-2/5;t_{12})$:
\begin{eqnarray}
\widetilde{y}_{32}=\frac{(u-1)^2(u+3)^2(3u^2+1)(7u^8-108u^6+314u^4-588u^2+119)}
{7(u+1)^3(u-3)(u^2+3)(u^2+4u-1)(3u^6-37u^4+209u^2+17)}.
\end{eqnarray}


The same covering $z=\varphi_{12}(x)$ can be applied to pullback the Fuchsian equations 
$E(1/3,1/2,0,1/4;t;z)$ and $E(1/3,1/2,0,1/2;t;z)$ to isomondromic matrix equations 
with four singular points.  Let us denote:
\begin{equation}
\lambda(x)=\frac{t^*_{12}x}{x+t^*_{12}-1}. 
\end{equation}
The fractional-linear transformation $\lambda(x)$ fixes the points $x=0$ and $x=1$, 
and moves $x=\infty$ to $x=t^*_{12}$. Theorem \ref{kit:method} can be applied to 
$\widehat{\varphi}_{12}(\lambda(x))$ with $k_0=3$, $k_1=2$, $k_\infty=4$. 
Let us denote $t_{60}=\lambda(t_{12})$ and $y_{61}=\lambda(y_{26})$. Explicitly, we have:
\begin{equation}
t_{60}=\frac{(u-1)(u+3)^3}{(u+1)(u-3)^3},\qquad
y_{61}=\frac{(u+3)^2(u^2-5)}{5(u+1)(u-3)(u^2+3)}.
\end{equation}
In the current application of Theorem \ref{kit:method}, the branches $x=t$ and
$x=y$ are given by, respectively, $x=t_{60}$ and $x=y_{61}$. 
We conclude that $y_{61}(t_{60})$ is a solution of $P_{VI}(1/4,1/4,1/4,-1/4;t_{60})$.
The same solution is given in \cite[pg. 25]{HGBAA}, reparametrized with $u\mapsto (s-3)/(s+1)$.

Currently, the implied $RS$-transformation is
$RS^2_4\left( 1/3 \atop 3+3+3+3 \right|  {1/2 \atop 2+2+2+2+2+2} \left| 1/4 \atop 5+4+1+1+1 \right)$.
To get a solution of $P_{VI}(1/4,1/4,1/4,1/4;t)$, we may use the upper-right
entry of the pullbacked equation.
In order to apply Theorem \ref{th:llentry}, we may substitute $x\mapsto 1/x$ 
in expression (\ref{eq:phi12}) of $\varphi_{12}(x)$. Accordingly, let $\widetilde{F}_{12}(x)$,
$\widetilde{P}_{12}(x)$ denote the polynomials $x^4F_{12}(1/x)$, $x^6P_{12}(1/x)$, respectively. 
A suitable syzygy 
between $\big(\widetilde{F}_{12}(x),\widetilde{P}_{12}(x),x\big)$ is the same as in (\ref{eq:syzs32})
except that the coefficients to $x^2$ and $x$ of the third component have to be interchanged.
The expression as in (\ref{eq:lle0}) is $(s+3)/2$. After application of back substitutions
$x\mapsto 1/x$, (\ref{eq:ts12}) and fractional-linear transformation $\lambda^{-1}$,  (\ref{eq:lambd12})
We get the following solution of $P_{VI}(1/4,1/4,1/4,1/4;t_{62})$:
\begin{equation}
y_{62}=-\frac{(u+3)^2}{3(u+1)(u-3)}.
\end{equation}
The parametrization in \cite[p.588]{Hit} and  \cite[(10)]{Bo3} is related by $u\mapsto-3/(2s-1)$.
Boalch notes that this solution is also equivalent to \cite[(E.29)]{Dubr}.

We may also consider $RS$-transformations
$RS^2_4\left( 1/3 \atop 3+3+3+3 \right|  {1/2 \atop 2+2+2+2+2+2} \left| 1/2 \atop 5+4+1+1+1 \right)$.
We have to compute syzygies between $\big(\widetilde{F}_{12}(x),\widetilde{P}_{12}(x),x^2\big)$.
The ``lower" syzygy $(U_2,V_2,W_2)$ gives a solution of $P_{VI}(1/2,1/2,1/2,5/2;t)$, 
or equivalently, 
$P_{VI}(1/2,1/2,1/2,-1/2;t)$. Incidentally, we get the same function $y_{62}(t_{60})$ as the $z$-root
of the lower-left entry, although the syzygy $(U_2,V_2,W_2)$ is different:
\begin{eqnarray} \textstyle
\left( x^2-2(s+3)x+1,\,-\frac12,\,3(s+4)\!\left(x^3+(2s+11)x^2+(s^2+5s+7)x-\frac12s-1\right) \right).
\end{eqnarray}
Hence, $y_{62}(t_{60})$ is a solution of $P_{VI}(1/2,1/2,1/2,-1/2;t_{60})$ as well.
As for the syzygies $(U_1,V_1,W_1)$ for the upper row of the Schlesinger matrix, we may take 
$\delta=0$ or $\delta=2$, and get the syzygies
\begin{eqnarray} \textstyle
\left(-\frac23x^2-\frac23(s+3)x+\frac13, -\frac16, 
x^4+(3s+16)x^3+(3s^2+25s+58)x^2+\ldots\right),\\ 
\textstyle \left( x^3+(s+7)x^2-(2s+5)x+1, -\frac12(x+1), \frac32(s+4)^2(2x^2+(2s+9)x-1) \right).
\end{eqnarray}
Eventually, we derive these solutions $y_{63}(t_{60})$ and $y_{64}(t_{60})$ of
$P_{VI}(1/2,1/2,1/2,-5/2;t_{60})$ and $P_{VI}(1/2,1/2,1/2,1/2;t_{60})$,
respectively,  
\begin{equation}
y_{63}=-\frac{(u+3)^2(u^2+7)}{7(u+1)(u-3)(u^2+3)},\qquad 
y_{64}= \frac{(u-1)(u+3)^2}{(u-3)(u^2+3)}.
\end{equation}
Algebraic solutions of $P_{VI}(1/2,1/2,1/2,1/2;t)$ are closely investigated in \cite{Hi} and \cite{Hit}.
In particular, the solution $t_{60}/y_{64}$ is presented in \cite[6.4]{Hi} and \cite[pg. 598]{Hit},
reparametrized by $u\mapsto 3(s+1)/(s-1)$. 
The equation $P_{VI}(1/2,1/2,1/2,1/2;t)$ is related to Picard's 
$P_{VI}(0,0,0,1;t)$ via an Okamoto transformation.

\section{Appendix}
\label{sec:appendix}

Recall that the sixth Painlev\'e equation is, canonically,
\begin{eqnarray}
 \label{eq:P6}
\frac{d^2y}{dt^2}&=&\frac 12\left(\frac 1y+\frac 1{y-1}+\frac 1{y-t}\right)
\left(\frac{dy}{dt}\right)^2-\left(\frac 1t+\frac 1{t-1}+\frac 1{y-t}\right)
\frac{dy}{dt}\nonumber\\
&+&\frac{y(y-1)(y-t)}{t^2(t-1)^2}\left(\alpha+\beta\frac t{y^2}+
\gamma\frac{t-1}{(y-1)^2}+\delta\frac{t(t-1)}{(y-t)^2}\right),
\end{eqnarray}
where $\alpha,\,\beta,\,\gamma,\,\delta\in\CC$ are parameters. As well-known
\cite{JM}, its solutions
define isomonodromic deformations (with respect to $t$) of the $2\times2$
matrix Fuchsian equation with 4 singular points ($\lambda=0,1,t$, and
$\infty$):
\begin{equation}
 \label{eq:JM}
\frac{d}{dz}\Psi=\left(\frac{A_0}z+
\frac{A_1}{z-1}+\frac{A_t}{z-t}\right)\Psi,\qquad
\frac{d}{dz}A_k=0\quad\mbox{for } k\in\{0,1,t\}.
\end{equation}
The standard correspondence is due to Jimbo and Miwa \cite{JM}. We choose
the traceless normalization of (\ref{eq:JM}), so we assume that the
eigenvalues of $A_0$, $A_1$, $A_t$ are, respectively, $\pm\theta_0/2$,
$\pm\theta_1/2$, $\pm\theta_t/2$, and that the matrix
$A_\infty:=-A_1-A_2-A_3$ is diagonal with the diagonal entries
$\pm\theta_\infty/2$.  Then the corresponding Painlev\'e equation has the
parameters
\begin{equation}
 \label{eq:para}
\alpha=\frac{(\theta_\infty-1)^2}2,\quad
\beta=-\frac{{\theta}_0^2}2,\quad\gamma=\frac{{\theta}_1^2}2,
\quad\delta=\frac{1-{\theta}_t^2}2.
\end{equation}
We refer to the numbers $\theta_0$, $\theta_1$, $\theta_t$ and
$\theta_\infty$ as {\em local monodromy differences}.

For any numbers $\nu_1,\nu_2,\nu_t,\nu_\infty$, we denote by
$P_{VI}(\nu_0,\nu_1,\nu_t,\nu_\infty;t)$ the Painlev\'e VI equation for the
local monodromy differences $\theta_i=\nu_i$ for $i\in\{0,1,t,\infty\}$, via
(\ref{eq:para}). Note that changing the sign of $\nu_0,\nu_1,\nu_t$ or
$1-\nu_\infty$ does not change the Painlev\'e equation. Fractional-linear
transformations for the Painlev\'e VI equation permute the 4 singular points
and the numbers $\nu_0,\nu_1,\nu_t,1-\nu_\infty$.

Similarly, for any numbers $\nu_1,\nu_2,\nu_t,\nu_\infty$ and a solution $y(t)$ of
$P_{VI}(\nu_0,\nu_1,\nu_t,\nu_\infty;t)$, we denote by $E(\nu_0,\nu_1,\nu_t,\nu_\infty;y(t);z)$
a Fuchsian equation (\ref{eq:JM}) corresponding to $y(t)$ by the Jimbo-Miwa correspondence.
The Fuchsian equation is determined uniquely up to conjugation of $A_0,A_1,A_t$ by
a diagonal matrix (dependent  on $t$ only). In particular, \mbox{$y(t)=t$} can be considered as a
solution of $P_{VI}(e_0,e_1,0,e_\infty;t)$. The equation $E(e_0,e_1,0,e_\infty;t;z)$ is a Fuchsian equation with 3 singular points, actually without the parameter $t$. Its solutions can be expressed
in terms of Gauss hypergeometric series with the local exponent differences $e_0$, $e_1$ and
$e_\infty\pm1$. 
We refer to $E(e_0,e_1,0,e_\infty;t;z)$ as a {\em matrix hypergeometric equation}, and
see it as a matrix form of Euler's ordinary hypergeometric equation.
In particular, the monodromy group of $E(1/3,1/2,0,1/5;t;z)$ or $E(1/3,1/2,0,2/5;t;z)$
is the icosahedral group.

The following matrix form of the hypergeometric equation is considered within
the Jimbo-Miwa correspondence \cite{JM}:
\begin{eqnarray} \label{eq:mhypergeom}
\frac{d}{dz}\Psi=\frac{1}{4e_\infty z\,(1-z)}
\left( \begin{array}{cc} e_0^2-e_1^2+e_\infty^2-2e_\infty^2z&e_\infty^2-(e_0+e_1)^2\\
(e_0-e_1)^2-e_\infty^2 &
2e_\infty^2z-e_0^2+e_1^2-e_\infty^2\end{array}\right) \Psi.
\end{eqnarray}
When considered as a ``constant" isomonodromic system, this equation corresponds to the 
function $y(t)=t $ as a solution of the equation $P_{VI}(e_0,e_1,0,e_\infty;t)$ within
the Jimbo-Miwa correspondence. The function $y(t)=t $ solves $P_{VI}(e_0,e_1,0,e_\infty;t)$
in the following sense: if we multiply both sides of (\ref{eq:P6}) by $y-t$ and simplify each fractional term,
the non-multiples of $y-t$ on the right-hand side form the expression
$\frac12\!\left(\frac{dy}{dt}\right)^2\!-\frac{dy}{dt}+\frac12\frac{y(y-1)}{t(t-1)}$.

Here is a solution of (\ref{eq:mhypergeom}), well defined if $e_0$ is not a positive integer:
\begin{eqnarray} \label{eq:lsolz0}
z^{-\frac12e_0}(1-z)^{-\frac12e_1} \left(\begin{array}{c}
(e_0+e_1-e_\infty)\;
\hpg21{\frac12(-e_0-e_1-e_\infty),\,1+\frac12(-e_0-e_1+e_\infty)}{1-e_0}{\,z\,} \\
(e_0-e_1+e_\infty)\;
\hpg21{\frac12(-e_0-e_1+e_\infty),\,1+\frac12(-e_0-e_1-e_\infty)}{1-e_0}{\,z\,}
\end{array}\right).
\end{eqnarray}
If $e_0\neq 0$, then an independent solution can be obtained by flipping the
sign of $e_0$ and $e_1$ in this expression. (If we would flip the sign of $e_0$ only, 
the Fuchsian equation would be different.) Up to constant multiples, local solutions
at singular points have the following asymptotic first terms:
\begin{eqnarray} \label{eq:asymtz0}
\mbox{at } z=0: && {e_0+e_1-e_\infty\choose
e_0-e_1+e_\infty}\;z^{-\frac12e_0} \qquad\mbox{or}\qquad
{e_0+e_1+e_\infty\choose e_0-e_1-e_\infty}\;z^{\frac12e_0}\,;\\
\mbox{at } z=1: && {e_0+e_1-e_\infty\choose
e_0-e_1-e_\infty}(1-z)^{-\frac12e_1} \quad\mbox{or}\quad
{e_0+e_1+e_\infty\choose e_0-e_1+e_\infty} (1-z)^{\frac12e_1}.
\end{eqnarray}
Hypergeometric solutions at $z=1$ can be obtained from (\ref{eq:lsolz0}) by the substitutions
\mbox{$z\mapsto 1-z$}, $e_0\leftrightarrow e_1$ and applying the matrix $1\;\ 0\choose\ 0\ -1$ 
to the solution vector. 
Due to the normalization, at $z=\infty$ we have a basis of solutions
\begin{equation} \label{eq:solinf}
{1\choose0}\,z^{\frac12e_\infty}+O\left(z^{\frac12e_\infty-1}\right), \qquad
{0\choose1}\,z^{-\frac12e_\infty}+O\left(z^{-\frac12e_\infty-1}\right).
\end{equation}
Explicitly, a hypergeometric basis at $z=\infty$ is 
\begin{eqnarray} \label{eq:lsolzi1}
z^{\frac12(e_1+e_\infty)}(1-z)^{-\frac12e_1} \!\left(\begin{array}{c}
4e_\infty(e_\infty-1)\;
\hpg21{\frac12(-e_0-e_1-e_\infty),\,\frac12(e_0-e_1-e_\infty)}{-e_\infty}{\frac1z\,} \\
\frac{e_\infty^2-(e_0-e_1)^2}z\,
\hpg21{1+\frac12(-e_0-e_1-e_\infty),\,1+\frac12(e_0-e_1-e_\infty)}{2-e_\infty}{\frac1z}\!
\end{array}\right),\\ \label{eq:lsolzi2}
z^{\frac12(e_1-e_\infty)}(1-z)^{-\frac12e_1} \!\left(\begin{array}{c}
\frac{e_\infty^2-(e_0+e_1)^2}z\,
\hpg21{1+\frac12(-e_0-e_1+e_\infty),\,1+\frac12(e_0-e_1+e_\infty)}{2+e_\infty}{\frac1z}\! \\
4e_\infty(e_\infty+1)\;
\hpg21{\frac12(-e_0-e_1+e_\infty),\,\frac12(e_0-e_1+e_\infty)}{e_\infty}{\frac1z\,}.
\end{array}\right).
\end{eqnarray}

\small

\bibliographystyle{alpha}

\end{document}